\pgfplotsset{compat=1.15}
\numberwithin{equation}{section}
\numberwithin{figure}{section}
\newtheorem{theorem}{Theorem}[section]
\newtheorem{prop}[theorem]{Proposition}
\newtheorem{lemma}[theorem]{Lemma}
\newtheorem{corollary}[theorem]{Corollary}
\newtheorem{claim}{Claim}[section]
\theoremstyle{definition}
\newcommand{\nc}{\newcommand}
\nc{\grad}{{\mbox{grad}\,}}
\nc{\R}{{\mathbb R}}
\nc{\Rn}{{\mathbb R}^n}
\nc{\N}{{\mathbb N}}
\nc{\Z}{{\mathbb Z}}
\nc{\K}{{\cal K}}
\nc{\kpo}{(\K^d_{2,gp})_o}
\nc{\krep}{\K^d_{3,gp}}
\nc{\BP}{\mathbb{P}}
\nc{\BQ}{\mathbb{Q}}
\nc{\BE}{\mathbb{E}}
\nc{\cH}{\mathcal{H}}
\nc{\cL}{\mathcal{L}}
\nc{\cE}{\mathcal{E}}
\nc{\BS}{\mathbb{S}}
\nc{\BSn}{\mathbb{S}^{n-1}}
\nc{\LL}{L^\circ}
\nc{\bB}{B}
\nc{\sph}{\mathbb{S}^{n-1}}
\nc{\Q}{\mathbb{Q}}
\nc{\cK}{\mathcal{K}}
\nc{\vaps}{\varepsilon}
\nc{\cV}{\mathcal{V}}
\nc{\cP}{\mathcal{P}}
\nc{\cR}{\mathcal{R}}
\nc{\cS}{\mathcal{S}}
\nc{\cB}{\mathcal{B}}
\nc{\cU}{\mathcal{U}}
\nc{\fB}{\mathfrak{B}}
\nc{\cC}{\mathcal{C}}
\nc{\ver}{{\rm{vert\, }}}
\nc{\iF}{\,\raisebox{0.0pt}{$\square$}\,}
\nc{\ind}{\mathbf{1}}	
\nc{\fed}{\llcorner}
\DeclareMathOperator{\Oh}{O}
\begin{document}

\title{Extremizers and stability of the Betke--Weil inequality}
\author{Ferenc A. Bartha\footnotemark[1], Ferenc Bencs\footnotemark[2], K\'aroly J. B\"or\"oczky\footnotemark[3], Daniel Hug\footnotemark[4]}

\renewcommand{\thefootnote}{\fnsymbol{footnote}}

\footnotetext[1]{University of Szeged,  Dugonics ter 13, H-6720 Szeged, Hungary.
E-mail: barfer@math.u-szeged.hu}

\footnotetext[2]{Alfr\'ed R\'enyi Institute of Mathematics, Re\'altanoda u.~13-15, H-1053 Budapest, Hungary. \newline E-mail: ferenc.bencs@gmail.com}

\footnotetext[3]{Alfr\'ed R\'enyi Institute of Mathematics, Re\'altanoda u.~13-15, H-1053 Budapest, Hungary. \newline
Central European University,
Nador utca 9, H-1051 Budapest, Hungary.
 \newline
E-mail: boroczky.karoly.j@renyi.hu}

\footnotetext[4]{Karlsruhe Institute of Technology (KIT), D-76128 Karlsruhe, Germany. E-mail: daniel.hug@kit.edu}

\date{\small \bf To the memory of Ulrich Betke and Wolfgang Weil}

\maketitle
\begin{abstract}
Let $K$ be a compact convex domain in the Euclidean plane. The mixed area $A(K,-K)$ of $K$ and $-K$ can be bounded from above by $1/(6\sqrt{3})L(K)^2$, where $L(K)$ is the perimeter of $K$. This was proved by  Ulrich Betke and Wolfgang Weil (1991). They also showed that if $K$ is a polygon, then equality holds if and
only if $K$ is a regular triangle. We prove that among all convex domains, equality holds only in this case, as conjectured by Betke and Weil. This is achieved by establishing a stronger stability result for  the geometric inequality $6\sqrt{3}A(K,-K)\le L(K)^2$.\\
{\bf Keywords}. {Geometric inequality, Brunn--Minkowski theory, Minkowski inequality, perimeter, mixed area, stability result}\\
{\bf MSC}. Primary  52A39, 52A40, 52A10; Secondary 52A25, 52A38.
\end{abstract}

\section{Introduction}

For  convex domains $K,M$ (compact convex  sets with non-empty interior) in $\R^2$, let $L(K)$ be the perimeter of $K$, let $A(K)$ be the area of $K$, and let
$A(K,M)$ denote the mixed area of $K$ and $M$ (see R.~Schneider \cite{Sch14} or Section~\ref{secmixed}).  In \cite{BeW91}, U.~Betke and W.~Weil proved the following theorem.

\begin{theorem}[Betke, Weil (1991)]
\label{Betke-Weil0}
If $K, M \subset\R^2$ are convex domains, then
\begin{equation}\label{BW1}
 L(K)L(M) \ge 8\, A(K, M)
\end{equation}
with equality if and only if $K$ and $M$ are orthogonal (possibly degenerate) segments.
\end{theorem}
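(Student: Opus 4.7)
The plan is to bound $A(K,M)$ using the representation $2A(K,M) = \int_{S^1} h_K\,dS_M$, where $S_M$ denotes the surface area measure of $M$; it satisfies $S_M(S^1)=L(M)$ and the closing condition $\int_{S^1} u\,dS_M(u)=0$. The latter yields the translation invariance $\int_{S^1}h_{K-t}\,dS_M = \int_{S^1} h_K\,dS_M$ for every $t\in\R^2$, so I would translate $K$ so that its smallest enclosing disk is centered at the origin. Writing $R(K)$ for the corresponding circumradius, this gives $h_K(u)\le R(K)$ on $S^1$, whence
\[
2A(K,M) = \int_{S^1} h_K\,dS_M \le R(K)\,L(M), \qquad\text{i.e.,}\qquad A(K,M)\le \tfrac{1}{2} R(K)\,L(M).
\]
The theorem then reduces to the planar geometric inequality $4R(K)\le L(K)$, with equality only for segments.

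For this key inequality I would use the standard two-case characterization of the smallest enclosing disk $D$ in $\R^2$: either (a) $K\cap\partial D$ contains two antipodal points $p,q$, or (b) $K\cap\partial D$ contains three points $p_1,p_2,p_3$ whose convex hull contains the centre $c$ of $D$. In case (a), $|p-q|=2R(K)$, and since $\partial K$ consists of two arcs joining $p$ and $q$, each of length at least $|p-q|$, one has $L(K)\ge 2|p-q|=4R(K)$, with equality iff $K$ is the segment $[p,q]$. In case (b), let $\alpha,\beta,\gamma\in(0,\pi)$ be the arcs of $\partial D$ cut out by the three points, so $\alpha+\beta+\gamma=2\pi$; the perimeter of the triangle $p_1p_2p_3$ equals $2R(K)[\sin(\alpha/2)+\sin(\beta/2)+\sin(\gamma/2)]$, and the strict concavity of $\sin$ on $(0,\pi)$ shows this sum to exceed $2$ throughout the open simplex (the minimum value $2$ is attained only on the boundary, where two of the arcs equal $\pi$ and the third vanishes, i.e.\ the degenerate segment case). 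Hence the triangle's perimeter, which lower-bounds $L(K)$, strictly exceeds $4R(K)$ in case (b).

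Combining both steps gives $A(K,M)\le L(K)L(M)/8$. For the equality case, both inequalities must be tight: tightness of $4R(K)\le L(K)$ forces $K$ to be a segment $c+[-R(K)v,R(K)v]$ for some unit vector $v$; since then $h_{K-c}(u)=R(K)|\langle u,v\rangle|$, tightness of $2A(K,M)\le R(K)L(M)$ requires $h_{K-c}(u)=R(K)$ for $S_M$-almost every $u\in S^1$, which means $S_M$ is supported on $\{\pm v\}$. Equivalently, $M$ is a segment in direction $v^\perp$, orthogonal to $K$.

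The main obstacle is the planar inequality $L(K)\ge 4R(K)$, and more specifically the trigonometric estimate $\sin(\alpha/2)+\sin(\beta/2)+\sin(\gamma/2)>2$ for interior configurations in case (b); the remainder of the argument is routine manipulation of support functions and surface area measures.
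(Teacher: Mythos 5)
Your proof is correct, and there is nothing in the paper to compare it against: Theorem~\ref{Betke-Weil0} is only quoted from Betke and Weil \cite{BeW91} and never proved in this article. Your route --- first $2A(K,M)=\int_{\mathbb{S}^1}h_K\,dS_M\le R(K)L(M)$ after translating the circumcenter of $K$ to the origin, then the planar inequality $L(K)\ge 4R(K)$ --- is a clean and self-contained argument, and both halves check out: the closing condition justifies the translation; the two-case description of the minimal enclosing disc is the standard one (two antipodal contact points, or three contact points whose hull contains the centre); and in the three-point case the function $\sin(\alpha/2)+\sin(\beta/2)+\sin(\gamma/2)$ is strictly concave on the polytope $\{\alpha+\beta+\gamma=2\pi,\ 0\le\alpha,\beta,\gamma\le\pi\}$, so its minimum $2$ is attained only at the vertices (permutations of $(\pi,\pi,0)$), which collapse to the antipodal case --- hence $L(K)>4R(K)$ there, and equality in $L(K)\ge 4R(K)$ forces $K$ to be a diametral segment. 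The equality analysis for $M$ via the support of $S_M$ is also right. Two small points you should add for completeness: (i) the converse verification that orthogonal segments do give equality (their Minkowski sum is a rectangle, so $A(K,M)=\tfrac12\ell_K\ell_M$ while $L(K)L(M)=2\ell_K\cdot 2\ell_M$); and (ii) since the paper defines ``convex domain'' to mean a compact convex set with nonempty interior, the equality clause only makes sense for the extension of the inequality to arbitrary compact convex sets --- your argument covers that generality, because $2A(K,M)=\int h_K\,dS_M$ and the surface area measure remain valid for degenerate bodies (for a segment $S_M$ is two point masses of total mass $L(M)$, for a point it vanishes), but this convention is worth stating explicitly.
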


This result has been generalized to higher dimensions in \cite{BH2020}, where also
various improvements in the sense of stability results have been obtained. What makes the variational  analysis of \eqref{BW1} convenient is the fact that $K$ and $M$ can be varied independently of each other and the dependence on $K$ and $M$ is Minkowski linear (in the Euclidean plane). In \cite{BeW91}, U.~Betke and W.~Weil also considered the case where $M=-K$ and found the following sharp geometric inequality.

\begin{theorem}[Betke, Weil (1991)]
\label{Betke-Weil}
If $K$ is a convex domain in $\R^2$, then
\begin{equation}
\label{Betke-Weil-eq}
L(K)^2\geq 6\sqrt{3}\,A(K,-K).
\end{equation}
In addition, if $K$ is a polygon, then equality holds if and only if $K$ is a regular triangle.
\end{theorem}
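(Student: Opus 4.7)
The plan is to reduce the inequality to convex polygons via approximation, handle the triangle case by the classical triangle isoperimetric inequality, and reduce the polygon case to the triangle case.

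First, since both $K \mapsto L(K)$ and $K \mapsto A(K,-K)$ are continuous with respect to the Hausdorff metric and convex polygons are dense in the space of convex bodies, it will suffice to establish $L(P)^2 \ge 6\sqrt{3}\, A(P,-P)$ for every convex polygon $P$; the general inequality then follows by approximation. The triangle case reduces cleanly: for any triangle $T$, the difference body $T - T$ is a centrally symmetric hexagon with $A(T - T) = 6\, A(T)$ (a direct shoelace calculation, which is the equality case of the Rogers--Shephard inequality in dimension two). Together with the identity $A(T - T) = 2\, A(T) + 2\, A(T,-T)$, this yields $A(T,-T) = 2\, A(T)$, so that the Betke--Weil inequality for triangles becomes exactly
\[
L(T)^2 \ge 12\sqrt{3}\, A(T),
\]
the classical isoperimetric inequality for triangles (a consequence of Heron's formula together with AM--GM applied to $s-a,\ s-b,\ s-c$), with equality iff $T$ is equilateral.

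For polygons $P$ with $n \ge 4$ edges, we proceed variationally. The functional $A(K,-K)$ attains its maximum on the compact class of convex bodies with fixed perimeter $L$, by Blaschke's selection theorem. A first-order perturbation of the support function shows that if a maximizer has smooth boundary, then the surface area measure of $-K$ must be proportional to Lebesgue measure on $S^1$, forcing $K$ to be a disk; but $L(D)^2 - 6\sqrt{3}\, A(D,-D) = 2\pi(2\pi - 3\sqrt{3})\,r^2 > 0$ for a disk $D$ of radius $r$, so the disk is not a maximizer. Hence the maximum must be attained at a non-smooth body, i.e.\ a polygon, and the problem reduces to showing that among polygons the maximum of $A(P,-P)/L(P)^2$ is attained only by the regular triangle---which, combined with the triangle case above, completes both the inequality and the equality characterization.

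\textbf{Main obstacle.} The key remaining step is proving that among convex polygons the regular triangle is the unique maximizer of $A(P,-P)/L(P)^2$. A tempting short-cut---decomposing $P$ as a Minkowski sum $P = T_1 + \cdots + T_m$ of triangles and invoking a pairwise bound $L(T_i) L(T_j) \ge 6\sqrt{3}\, A(T_i, -T_j)$---fails, because such a pairwise inequality is violated by perpendicular degenerate triangles (segments), where Theorem~\ref{Betke-Weil0} already provides the sharp constant $8 < 6\sqrt{3}$. The argument must therefore exploit the genuinely two-dimensional, non-degenerate structure of the extremizer rather than reduce to a universal pairwise estimate, and this is where the bulk of the combinatorial and variational work must be carried out.
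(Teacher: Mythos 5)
There is a genuine gap: your argument stops exactly where the theorem begins. The reduction to polygons by continuity, the identity $A(T,-T)=2A(T)$ for triangles, and the equivalence of the triangle case with the isoperimetric inequality $L(T)^2\ge 12\sqrt{3}\,A(T)$ are all correct (and the paper uses the same identity). But for polygons with four or more vertices you only \emph{state} that ``the regular triangle is the unique maximizer of $A(P,-P)/L(P)^2$'' and then acknowledge in your final paragraph that the combinatorial and variational work needed to prove this remains to be carried out. That unproved claim \emph{is} Theorem~\ref{Betke-Weil}; nothing in your write-up establishes it. The paper (following Betke and Weil) handles precisely this step via Betke's formula \eqref{BetkeP-P} for $A(P,-P)$: a compactness argument produces a minimizer of $L(P)^2/A(P,-P)$ among polygons with at most $k$ vertices, Proposition~\ref{Betke-Weil-deform} shows by explicit local deformations (moving a vertex along an edge or a support line, or translating a side, with the effect on $A(P,-P)$ read off from \eqref{BetkeP-P}) that any minimizer must be a regular polygon with an odd number of sides, and Lemma~\ref{Betke-Weil-regular} computes $L(P)^2/A(P,-P)=4k\sin(\pi/k)\ge 20\sin(\pi/5)>6\sqrt{3}$ for odd regular $k$-gons with $k\ge 5$, leaving the regular triangle as the only candidate.

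A secondary but real flaw is the step ruling out non-polygonal extremizers. Maximizing $A(K,-K)$ at fixed perimeter and concluding from a first-order condition that a smooth maximizer is a disk is plausible in outline, but your dichotomy ``smooth or polygon'' is false: a convex body that is not smooth need not be a polygon (its surface area measure can have atoms together with a nontrivial continuous part, or be supported on a complicated set). Moreover, the admissibility of the perturbations $h_K+\epsilon\varphi$ is delicate for non-smooth $K$, since such a perturbation need not remain a support function for $\epsilon$ of both signs, so the Lagrange condition $S_{-K}=c\cdot\ell$ does not follow at an arbitrary maximizer. This is why the paper avoids a global variational argument on general convex bodies and instead works entirely within the class of polygons with a bounded number of vertices, where compactness and the explicit deformations of Proposition~\ref{Betke-Weil-deform} are available.
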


It is clear from the continuity of the involved functionals that it is sufficient to establish this inequality for convex polygons to
deduce it for general convex domains in the plane. However,
it has remained an open problem to characterize the equality case in (\ref{Betke-Weil-eq}) among all convex domains. We resolve this problem by proving more generally a stability version of Theorem~\ref{Betke-Weil}. We refer to \cite{BH2020} (and in particular to the literature cited there) for a brief introduction to stability improvements of
geometric inequalities.

\begin{theorem}
\label{Betke-Weil-stab}
If $K$ is a convex domain in $\R^2$ and
$$
L(K)^2\leq \left(1+  \varepsilon \right)6\sqrt{3}\,A(K,-K)
$$
for some $\varepsilon\in[0, 2^{-28}]$, then there exists a regular triangle $T$ with centroid $z$ such that
$$
T-z\subset K-z\subset \left(1+400\sqrt{\varepsilon}\right)(T-z).
$$
\end{theorem}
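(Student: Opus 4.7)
The key observation is that for any triangle $T \subset \R^2$, the difference body $T - T$ is a centrally symmetric hexagon with $A(T - T) = 6\,A(T)$ (equality in the $n=2$ Rogers--Shephard inequality). Combined with $A(T - T) = 2A(T) + 2A(T,-T)$, this yields $A(T,-T) = 2A(T)$, so the Betke--Weil inequality restricted to triangles is equivalent to the classical isoperimetric inequality for triangles $L(T)^2 \geq 12\sqrt{3}\,A(T)$, with equality iff $T$ is regular. My plan therefore has two steps: (i) show that a convex domain $K$ with small Betke--Weil deficit is close, in Hausdorff distance, to some (not necessarily regular) triangle $T_0$, in the sense that $T_0$ sits inside $K$ (after a small translation) and $K$ lies in a controlled dilation of $T_0$; (ii) apply the quantitative stability of the triangle isoperimetric inequality to replace $T_0$ by a nearby regular triangle $T$. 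Combining (i) and (ii) by inclusion then produces the desired sandwich $T - z \subset K - z \subset (1 + 400\sqrt{\varepsilon})(T - z)$.

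\textbf{Step (ii).} Parametrise $T_0$ by its three side lengths $a, b, c$. Expanding the deficit $L(T_0)^2 - 12\sqrt{3}\,A(T_0)$ (computed via Heron's formula) around the equilateral configuration $a = b = c$ yields a positive-definite quadratic form in the pairwise differences, giving an explicit local lower bound of the shape $L(T_0)^2 - 12\sqrt{3}\,A(T_0) \geq c_0[(a-b)^2 + (b-c)^2 + (c-a)^2]$ for some $c_0 > 0$. After translating to the centroid and shrinking slightly to guarantee inclusion, this produces a regular triangle $T$ with $T \subset T_0 \subset (1 + C\sqrt{\eta})T$, where $\eta = L(T_0)^2/(12\sqrt{3}\,A(T_0)) - 1$ is the relative isoperimetric deficit of $T_0$. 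This is essentially the classical quantitative isoperimetric inequality for triangles; the explicit constants are straightforward to track.

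\textbf{Step (i) --- the main obstacle.} I would first approximate $K$ by a convex polygon $P$ with $L(P)$, $A(P,-P)$, and Hausdorff position close to those of $K$, so that $P$ inherits the near-equality in Betke--Weil up to a controllable error. Next, choose $T_0 \subset P$ to be an inscribed triangle of maximal area: classical affine-geometric bounds give $A(T_0) \geq c_1\,A(P)$ and place $P$ inside a controlled dilation of $T_0$ about its centroid. The core task is then to show that if $P$ deviates significantly from $T_0$---i.e., the three caps $P \setminus T_0$ have sizes comparable to $P$---the Betke--Weil deficit $L(P)^2 - 6\sqrt{3}\,A(P,-P)$ is bounded below by a positive quantity that is quadratic in the cap sizes. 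Using the support-function formula $2A(P,-P) = \sum_j \ell_j\,h_P(-u_j)$ over the edges of $P$, each cap's contribution to $L(P)^2$ and to $A(P,-P)$ can be isolated and expanded around the triangular configuration. The crucial and most delicate point is to identify the correct transverse modes of variation about $T_0$ (after quotienting by translations and scalings) and to prove a uniform positive lower bound on the associated second variation; these explicit bounds, combined with the polygonal approximation error and Step (ii), feed into a careful optimisation that yields the coefficient $400$, the restriction $\varepsilon \leq 2^{-28}$ being precisely what ensures the perturbation stays in the quadratic regime where these second-order estimates are sharp.
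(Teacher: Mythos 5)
Your overall architecture (inscribe a maximal-area triangle $T_0$, control the ``caps'' of $K$ over its sides by a second-variation argument, and finish with stability of the triangle isoperimetric inequality, using $A(T_0,-T_0)=2A(T_0)$) is the same skeleton as the paper's local stability proof. But Step (i) contains a genuine gap at exactly the point you flag as ``crucial and most delicate'': the second variation of the Betke--Weil deficit around the triangular configuration is \emph{not} positive definite transverse to translations and scalings. The functional $A(\cdot,-\cdot)$ does not increase when a triangle is replaced by a strictly larger inscribed hexagon: if $T_0$ has centroid $o$ and $Q=[v_1,v_2,v_3,-v_1,-v_2,-v_3]$, then $A(T_0,-T_0)=2A(T_0)=A(Q)=A(Q,-Q)$ although $T_0\subsetneq Q$. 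Concretely, if one grows all three caps by the same relative height $t$ with apex over the midpoint of each side, a direct computation (with $a_i=2$, $h_i=\sqrt3$) gives $L^2=36(1+3t^2)$ and $6\sqrt3\,A(\cdot,-\cdot)=36(1+3t^2)$: the deficit lower bound obtainable from one boundary point per cap is \emph{identically zero} along this one-parameter family. Hence your claimed bound ``deficit $\gtrsim$ (cap sizes)$^2$'' is false; the quadratic form only controls the differences of the cap heights and the deviation of $T_0$ from equilateral, not the common cap height itself. The paper's Hessian at the extremal point has eigenvalues $6,18,36,36,0$, with the null eigenvector exactly in the uniform-cap-growth direction, and an entirely separate mechanism (Steps 4--6 of its Proposition on local stability: showing $K\subset\frac12H_1+\frac12H_2$ and that this forces $A(H_2,-H_2)-A(K,-K)\gtrsim t_1t_2+t_2t_3+t_3t_1$, contradicting the near-equality) is needed to exclude large uniform caps. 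Without an argument of this kind your sandwich $T\subset K\subset(1+C\sqrt\varepsilon)T$ cannot be reached.

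Two further points are deferred rather than solved. First, even on the non-degenerate subspace, establishing the uniform positive lower bound on the second variation over the full parameter range is not ``straightforward to track'': in the paper this is Lemma 3.1, whose verification required validated interval arithmetic over tens of thousands of subboxes. Second, a local expansion around $T_0$ only applies once $K$ is already known to be quantitatively close to a triangle; you still need a global argument ruling out near-extremizers far from all triangles (the paper does this by a compactness/minimization argument on $k$-gons together with a deformation lemma and an explicit deficit bound for regular odd-gons such as the pentagon). Your maximal inscribed triangle only gives $K$ inside a constant dilate of $T_0$, which is not by itself enough to enter the quadratic regime.
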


The optimality of the stability exponent $\frac{1}{2}$ of $\varepsilon$ can be seen by considering a regular triangle $T$ of edge length 2. Then we add over each edge $E_i$ of $T$ an isosceles triangle
with height $\sqrt{\varepsilon}$ which has the side $E_i$ in common with $T$ (for $i=1,2,3$). For the resulting
hexagon $H$ we have $L(H)^2-6\sqrt{3}A(H,-H)=36\varepsilon$. However, if $d_{\rm tr}(H)$ is the minimal number $\rho\ge 0$ for which there is a regular triangle $T_0$ with centroid $z$ such that $T_0-z\subset H-z\subset (1+\rho)(T_0-z)$, then it is easy to check that $d_{\rm tr}(H)\ge \sqrt{\varepsilon}$.

\begin{corollary}\label{cor1}
Equality holds in (\ref{Betke-Weil-eq})
if and only if $K$ is a regular triangle.
\end{corollary}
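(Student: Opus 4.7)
The plan is to derive Corollary~\ref{cor1} as a direct consequence of Theorem~\ref{Betke-Weil-stab} and the polygon case of Theorem~\ref{Betke-Weil}. The ``if'' direction is already contained in Theorem~\ref{Betke-Weil}: a regular triangle is a polygon, and that theorem asserts equality in \eqref{Betke-Weil-eq} precisely for regular triangles within the class of polygons, so any regular triangle $K$ satisfies $L(K)^2=6\sqrt{3}\,A(K,-K)$.

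For the nontrivial ``only if'' direction, I would assume $K$ is a convex domain satisfying $L(K)^2=6\sqrt{3}\,A(K,-K)$, and then simply apply Theorem~\ref{Betke-Weil-stab} with $\varepsilon=0$, which lies in the admissible range $[0,2^{-28}]$. The theorem then produces a regular triangle $T$ with centroid $z$ such that
$$
T-z\subset K-z\subset \bigl(1+400\sqrt{0}\bigr)(T-z)=T-z,
$$
forcing $K-z=T-z$, that is, $K$ is a translate of the regular triangle $T$.

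There is no real obstacle here beyond checking that the two set inclusions collapse to equality when $\varepsilon=0$, which is immediate. The content of the corollary is therefore entirely absorbed into the stability statement of Theorem~\ref{Betke-Weil-stab}; the difficulty lies in proving that stability theorem, not in deducing the characterization of equality from it.
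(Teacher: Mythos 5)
Your deduction is correct and is exactly the route the paper intends: the corollary is stated as an immediate consequence of Theorem~\ref{Betke-Weil-stab} applied with $\varepsilon=0$ (together with the ``if'' direction already contained in Theorem~\ref{Betke-Weil}). Nothing further is needed.
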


In \cite{BeW91}, Betke and Weil also discuss an application of their Theorem \ref{Betke-Weil}
to an inequality for characteristics of a planar Boolean model. As a consequence of Corollary \ref{cor1},
the equality condition for the lower bound provided in \cite[Theorem 3]{BeW91} now turns into an `if and only if' statement.

For the proof of the inequality \eqref{Betke-Weil-eq}, it is sufficient to consider (convex) polygons with at most $k$ vertices, for any fixed $k\ge 3$.
This task was accomplished by Betke and Weil and we add the observation (extracted from an adaptation of their argument) that for a polygon $P$
which is not a regular $k$-gon  with an odd number of sides, there exist polygons $P'$ with at most $k$ vertices arbitrarily close to $P$
such that
$$
\frac{L(P')^2}{A(P',-P')}<\frac{L(P)^2}{A(P,-P)},
$$
see Proposition \ref{Betke-Weil-deform}. While this can be used to prove the inequality, it does not give control over the equality cases.
To determine all extremal sets, we show that if $K$ is a convex domain which is not too far from a regular triangle, then the
inequality \eqref{Betke-Weil-eq} can be strengthened to a stability result, that is, we show that if $K$  also satisfies
$$
L(K)^2\le (1+\varepsilon)6\sqrt{3}A(K,-K),
$$
then $K$ is $\varepsilon$ close to a regular triangle (if $\varepsilon>0$ is small enough). This local stability result is stated and proved in Section \ref{seclocal} (see Proposition \ref{Betke-Weil-local-stab}). An outline of the
proof of Propositon \ref{Betke-Weil-local-stab}, which is divided into six steps, is given at the beginning of the proof. A major geometric idea, underlying the argument is to approximate $K$ from inside by a triangle $T\subset K$ with maximal area. With  $T$ and $K$ we associate hexagons $H_1,H_2$ with $H_2\supset K$ and $T\subset H_1\subset K$. Another hexagon $H_0$ is derived from $H_1$ so that the perimeter is minimized. Then we show that
$$
L(K)^2-6\sqrt{3}A(K,-K)\ge L(H_0)^2-6\sqrt{3}A(H_2,-H_2)\ge 0.
$$
The fact that the right side is nonnegative is far from obvious. 
More generally, we use a variational argument and validated numerics to establish a lower bound which involves five parameters which determine the shapes of $T$ and $H_2$ (see Lemma \ref{Bekte-Weil-H0H2}). In the course of the proofs, we have to determine various
mixed areas of polygons. These mixed areas are obtained by a classical formula due to Minkowski and by a more recent one which is due to Betke \cite{Bet92} and was first applied in \cite{BeW91}.

\section{Notation and mixed area}
\label{secmixed}

For $p_1,\ldots,p_\ell\in \R^2$, $\ell\in\N$, we write $[p_1,\ldots,p_\ell]$ to denote the convex
hull of the point set $\{p_1,\ldots,p_\ell\}$. In particular, $[p_1,p_2]$ is the segment connecting $p_1$ and $p_2$, and if  $p_1,p_2,p_3$ are not collinear, then
  $[p_1,p_2,p_3]$ is the triangle with vertices $p_1,p_2,p_3$. In addition, the positive hull of $p_1,p_2\in\R^2$ is given by ${\rm pos}\{p_1,p_2\}=\{\alpha_1 p_1+\alpha_2 p_2:\alpha_1,\alpha_2\geq 0  \}$.
The scalar product of $x,y\in\R^2$ is denoted by $\langle x,y\rangle$, and the corresponding Euclidean norm of
$x$ is $\|x\|=\langle x,x\rangle^{1/2}$. In addition, the determinant of the $2\times 2$ matrix with columns $x,y\in\R^2$ is
denoted by $\det(x,y)$. The space of compact convex sets in $\R^2$ is equipped with the Hausdorff metric.

In the following, by a polygon we always mean a convex set.
For a (convex) polygon $P$ in $\R^2$, let $\cU(P)$ denote the finite set of exterior unit normals to the sides of $P$. For
$u\in \cU(P)$, we write $S_P(u)$ to denote the length of the side of $P$ with exterior normal $u$.  As usual, the support function $h_K$ of a compact convex set $K\subset\R^2$ is defined by $h_K(x)=h(K,x)=\max\{\langle x,y\rangle:y\in K\}$ for $ x\in\R^2$.

We recall and will use repeatedly two formulas which allow us to calculate and analyze the mixed area $A(P,Q)$ of two
polygons $P$ and $Q$.  The first is due to Minkowski (see  \cite{Sch14,HW2020}) and states that
\begin{equation}
\label{MinkowskiPQ}
A(P,Q)=\frac12\sum_{u\in \cU(P)} h_Q(u) S_P(u).
\end{equation}
Since $h_{-P}(u)=h_P(-u)$ for $u\in\R^2$, (\ref{MinkowskiPQ}) implies that
\begin{equation}
\label{MinkowskiP-P}
A(P,-P)=\frac12\sum_{u\in \cU(P)} h_P(-u) S_P(u).
\end{equation}
Since $A(P,-P)=A(-P,P)$ (see also below) and $S_{-P}(u)=S_P(-u)$, we also have
$$
A(P,-P)=\frac12\sum_{u\in \cU(P)} h_P(u) S_P(-u).
$$
For instance, if $P$ is a triangle it follows from \eqref{MinkowskiP-P} that $A(P,-P)=2A(P)$.

Another useful formula was established much later by Betke \cite{Bet92}. If $w$ is a unit vector
 with $w\not\in \cU(P)\cup \cU(-Q)$, then
\begin{equation}
\label{BetkePQ}
2\cdot A(P,Q)=\sum_{\substack{u\in \cU(P),v\in\cU(Q)\\ w\in{\rm pos}\{u,-v\}}} |\det(u,v)| S_P(u)S_Q(v).
\end{equation}
In particular, (\ref{BetkePQ}) yields that if $w\not\in \cU(P)$ for a (fixed) unit vector $w$, then
\begin{equation}
\label{BetkeP-P}
A(P,-P)=\sum_{\substack{\{u,v\}\in \cU(P)\\ w\in{\rm pos}\{u,v\}}} |\det(u,v)|S_P(u)S_P(v),
\end{equation}
where the summation extends over all  subsets of $\cU(P)$ of cardinality two for which $w\in{\rm pos}\{u,v\}$ holds (the factor $2$ from the preceding formula cancels, since we do not consider ordered pairs).
It is formula (\ref{BetkeP-P}) that was used in a clever way by Betke and Weil \cite{BeW91} to prove
the Betke--Weil inequality stated in Theorem~\ref{Betke-Weil}.

Minkowski proved that the notion of mixed area can be extended to any pair $K,M$ of compact convex sets in $\R^2$
(see \cite{Sch14,HW2020}), and for compact convex sets $K_1,K_2,M\subset\R^2$ and $\alpha_1,\alpha_2\geq 0$,
it is known that
\begin{align*}
\nonumber
A(K,M)&=A(M,K),\\
A(K+z_1,M+z_2)&=A(K,M)\mbox{ \ for $z_1,z_2\in\R^2$,}
\\
\nonumber
A(\Phi K,\Phi M)&=|\det \Phi|\cdot A(K,M) \mbox{ \ for $\Phi\in {\rm GL}(2,\R)$,}\\
\nonumber
A(K,K)&=A(K),\mbox{ \ which is the area of }K,\\
A(\alpha_1K_1+\alpha_2K_2,M)&= \alpha_1A(K_1,M)+\alpha_2A(K_2,M),\\
A(K_1,M)&\leq A(K_2,M)\mbox{ \ if }K_1\subset K_2.
\end{align*}

We note that it is a subtle issue to decide under which conditions on convex polygons $P\subset Q$  the inequality
$A(P,-P)\leq A(Q,-Q)$  is strict. For example, let $P$ be a  triangle with its centroid at the origin $o$. Let
$v_1,v_2,v_3$ denote the vertices of $P$, and let $Q$ be the hexagon with vertices
$v_1,v_2,v_3,-v_1,-v_2,-v_3$. Then $P\subset Q$ and $P\neq Q$, actually $A(Q)=2A(P)$, and still we have
$A(P,-P)=2A(P)=A(Q)=A(Q,-Q)$.

Finally, we recall that $A(\cdot,\cdot)$ is additive (a valuation) in both arguments. By this we mean that if $K,M,L$ are compact convex sets in the plane and $K\cup M$ is also convex, then
$$
A(K\cup M,L)+A(K\cap M,L)=A(K,L)+A(M,L).
$$
By symmetry the same property holds for the second argument.

\section{An auxiliary result for associated hexagons}
\label{secpolytopes}

Let $T\subset K$ denote a triangle of maximal area contained in $K$. Let $v_1,v_2,v_3$ be the vertices of $T$,
let $a_i$ be the side opposite to $v_i$, whose length is also denoted by $a_i$ for $i=1,2,3$, and let $h_i$ be the height of $T$ corresponding to $a_i$. Then we have
$$
2A(T)=a_1h_1=a_2h_2=a_3h_3.
$$
3We observe that the line passing through $v_i$ and parallel to the side $a_i$ is a supporting line to $K$,  by the maximality of the area of $T$. The width of $K$ orthogonal to $a_i$ can be expressed in the form $(1+t_i)h_i$ for some $t_i\in [ 0,1]$, where $t_i\le 1$ follows since $T$ has maximal area among all triangles in $K$. (The width of $K$ orthogonal to $a_i$ equals the length of the projection of $K$ to a line orthogonal to $a_i$.) It follows that
$K$ is contained in a circumscribed hexagon $H_2$ such that, for $i=1,2,3$, $H_2$ has two  sides parallel to $a_i$, one of which contains $v_i$ and has  length $(t_j+t_k)a_i$, $\{i,j,k\}=\{1,2,3\}$, and the
opposite side has length $(1-t_i)a_i$ (see Figure \ref{Fig1}). These assertions follow by elementary geometry from
the similarity of corresponding triangles. In fact, with the notation from Figure \ref{Fig1} we have
$$
{\|w_{32}'-w_{32}\|}=\frac{a_2}{a_3}\cdot {\|w_{32}-v_3\|} \quad \text{and}\quad {\|w_{32}'-w_{32}\|}=\frac{a_2}{h_2}\cdot {t_2h_2},
$$
hence
$$
\|w_{32}'-w_{32}\|=t_2a_2,\quad \|w_{32}-v_3\|=t_2a_3,
$$
and similarly for permutations of the indices. Moreover,
$$
\frac{\|w_{32}'-w_{12}'\|}{a_2}=\frac{h_2(1+t_2)}{h_2},
$$
so that $\|w_{32}'-w_{12}'\|=(1+t_2)a_2$, and hence
$$
\|w_{32}-w_{12}\|=(1+t_2)a_2-\|w_{32}'-w_{32}\|-\|w_{12}'-w_{12}\|=(1-t_2)a_2.
$$

Next, for $i=1,2,3$ and $\{i,j,k\}=\{1,2,3\}$ we choose a point $q_i\in [w_{ji},w_{ki}]\cap K$. Then we
 define the (possibly degenerate) hexagon $H_1=[v_1,q_2,v_3,q_1,v_2,q_3]\subset K$.
In addition, let $p_i$ be the point on the line determined by $[w_{ji},w_{ki}]$  (and parallel to $a_i$) which lies on the perpendicular bisector of the side $a_i$ of $T$, and let $H_0$ be the hexagon with vertices $v_1,p_2,v_3,p_1,v_2,p_3$  (see Figure \ref{Fig1}). Note that in general $H_0$ may not be convex, but the restricted choice of parameters encountered in the following will always ensure that $H_0$ is convex.

\begin{center}
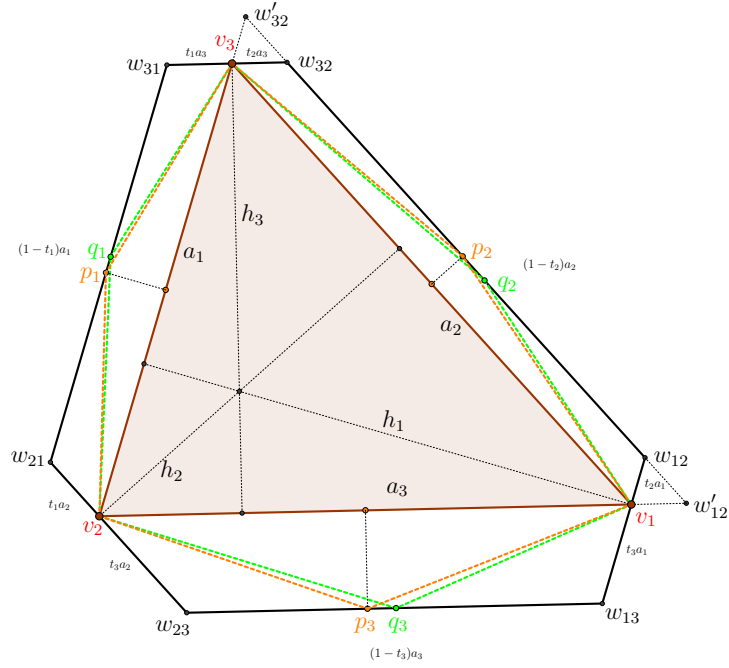
\begin{figure}

\centering
\scalebox{0.4}{
\definecolor{xdxdff}{rgb}{0.49019607843137253,0.49019607843137253,1.}
\definecolor{uuuuuu}{rgb}{0.26666666666666666,0.26666666666666666,0.26666666666666666}
\definecolor{zzttqq}{rgb}{0.6,0.2,0.}
\definecolor{ududff}{rgb}{0.30196078431372547,0.30196078431372547,1.}
\definecolor{qqwuqq}{rgb}{0.,0.39215686274509803,0.}
\begin{tikzpicture}[line cap=round,line join=round,>=triangle 45,x=1.0cm,y=1.0cm]
\fill[line width=2.pt,color=zzttqq,fill=zzttqq,fill opacity=0.10000000149011612] (4.6705026639999945,7.326140110000009) -- (17.804855917999998,-7.289238140000015) -- (0.30537635999999335,-7.6789815600000155) -- cycle;
\draw [line width=2.pt,color=zzttqq] (4.6705026639999945,7.326140110000009)-- (17.804855917999998,-7.289238140000015);
\draw [line width=2.pt,color=zzttqq] (17.804855917999998,-7.289238140000015)-- (0.30537635999999335,-7.6789815600000155);
\draw [line width=2.pt,color=zzttqq] (0.30537635999999335,-7.6789815600000155)-- (4.6705026639999945,7.326140110000009);
\draw [line width=2.pt] (6.479191485415802,7.3664227118132795)-- (18.256021058308615,-5.738357970189127);
\draw [line width=2.pt] (2.5264865420783305,7.278389194144292)-- (-1.3038295043376478,-5.888322215410651);
\draw [line width=2.pt] (3.179723019591773,-10.87743852542114)-- (16.849583437660886,-10.572987291165703);
\draw [line width=2.pt,dash pattern=on 3pt off 3pt,color=green] (4.6705026639999945,7.326140110000009)-- (12.97621239524618,0.13680003766967452);
\draw [line width=2.pt,dash pattern=on 3pt off 3pt,color=green] (12.97621239524618,0.13680003766967452)-- (17.804855917999998,-7.289238140000015);
\draw [line width=2.pt,dash pattern=on 3pt off 3pt,color=green] (17.804855917999998,-7.289238140000015)-- (10.067657827557744,-10.724032404976462);
\draw [line width=2.pt,dash pattern=on 3pt off 3pt,color=green] (10.067657827557744,-10.724032404976462)-- (0.30537635999999335,-7.6789815600000155);
\draw [line width=2.pt,dash pattern=on 3pt off 3pt,color=green] (0.30537635999999335,-7.6789815600000155)-- (0.6769335190250809,0.9205506773987393);
\draw [line width=2.pt,dash pattern=on 3pt off 3pt,color=green] (0.6769335190250809,0.9205506773987393)-- (4.6705026639999945,7.326140110000009);
\draw [line width=1.pt,dotted] (1.7753065570277258,-2.626096507717178)-- (17.804855917999998,-7.289238140000015);
\draw [line width=1.pt,dotted] (5.002362613469259,-7.5743716211699645)-- (4.6705026639999945,7.326140110000009);
\draw [line width=1.pt,dotted] (10.18018905245636,1.1951834462874285)-- (0.30537635999999335,-7.6789815600000155);
\draw [line width=1.pt,dotted] (4.6705026639999945,7.326140110000009)-- (5.121667804308615,8.877020279810893);
\draw [line width=2.pt,] (2.5264865420783305,7.278389194144292)-- (6.479191485415802,7.3664227118132795);
\draw [line width=2.pt] (18.256021058308615,-5.738357970189127)-- (16.849583437660886,-10.572987291165703);
\draw [line width=2.pt] (-1.3038295043376478,-5.888322215410651)-- (3.179723019591773,-10.87743852542114);
\draw (2.916657273999994,1.3630657839999971) node[anchor=north west] {\scalebox{1.9}{$a_1$}};
\draw (11.374089487999996,-1.0533434200000074) node[anchor=north west] {\scalebox{1.9}{$a_2$}};
\draw (9.601782915999995,-6.4) node[anchor=north west] {\scalebox{1.9}{$a_3$}};
\draw (-0.23,1.5) node[anchor=north west] {\scalebox{1.9}{\begin{color}{green}$q_1$\end{color}}};
\draw (-0.47,0.7) node[anchor=north west] {\scalebox{1.9}{\begin{color}{orange}$p_1$\end{color}}};
\draw (-2.508025840000069,1.50256295979999983) node[anchor=north west] {$(1-t_1)a_1$};
\draw (13.25,0.413) node[anchor=north west] {\scalebox{1.9}{\begin{color}{green}$q_2$\end{color}}};
\draw (12.4,1.5) node[anchor=north west] {\scalebox{1.9}{\begin{color}{orange}$p_2$\end{color}}};
\draw (14.100729613999996,1.0138348619999977) node[anchor=north west] {$(1-t_2)a_2$};
\draw (9.659218439999995,-10.880005894000026) node[anchor=north west] {\scalebox{1.9}{\begin{color}{green}$q_3$\end{color}}};
\draw (8.559218439999995,-10.880005894000026) node[anchor=north west] {\scalebox{1.9}{\begin{color}{orange}$p_3$\end{color}}};
\draw (9.059218439999995,-11.880005894000026) node[anchor=north west] {$(1-t_3)a_3$};
\draw (9.464346729999995,-4.028295980000116) node[anchor=north west] {\scalebox{1.9}{$h_1$}};
\draw (2.16,-5.68) node[anchor=north west] {\scalebox{1.9}{$h_2$}};
\draw (4.826400031999994,2.8051164379999998) node[anchor=north west] {\scalebox{1.9}{$h_3$}};
\draw (-1.5,-7.0) node[anchor=north west] {$t_1a_2$};
\draw (0.5,-9.0) node[anchor=north west] {$t_3a_2$};
\draw (-0.4,-7.7) node[anchor=north west] {\scalebox{1.9}{\begin{color}{red}$v_2$\end{color}}};
\draw (-2.75,-5.4) node[anchor=north west] {\scalebox{1.9}{$w_{21}$}};
\draw (2.00,-11.0) node[anchor=north west] {\scalebox{1.9}{$w_{23}$}};

\draw (17.8,-7.4) node[anchor=north west] {\scalebox{1.9}{\begin{color}{red}$v_1$\end{color}}};
\draw (16.8,-10.6) node[anchor=north west] {\scalebox{1.9}{$w_{13}$}};
\draw (18.4,-5.4) node[anchor=north west] {\scalebox{1.9}{$w_{12}$}};
\draw (19.7,-6.8) node[anchor=north west] {\scalebox{1.9}{$w_{12}'$}};

\draw (17.5,-8.5) node[anchor=north west] {$t_3a_1$};
\draw (18.1,-6.3) node[anchor=north west] {$t_2a_1$};
\draw (3.95,8.4) node[anchor=north west] {\scalebox{1.9}{\begin{color}{red}$v_3$\end{color}}};
\draw (6.7,7.7) node[anchor=north west] {\scalebox{1.9}{$w_{32}$}};
\draw (5.25,9.5) node[anchor=north west] {\scalebox{1.9}{$w_{32}'$}};
\draw (1.05,7.6) node[anchor=north west] {\scalebox{1.9}{$w_{31}$}};
\draw (3.0,8.0) node[anchor=north west] {$t_1a_3$};
\draw (5.0,8.0) node[anchor=north west] {$t_2a_3$};
\begin{scriptsize}
\draw [fill=uuuuuu] (6.479191485415802,7.3664227118132795) circle (2.0pt);
\draw [fill=uuuuuu] (2.5264865420783305,7.278389194144292) circle (2.0pt);
\draw [fill=uuuuuu] (-1.3038295043376478,-5.888322215410651) circle (2.0pt);
\draw [fill=uuuuuu] (3.179723019591773,-10.87743852542114) circle (2.0pt);
\draw [fill=uuuuuu] (16.849583437660886,-10.572987291165703) circle (2.0pt);
\draw [fill=uuuuuu] (18.256021058308615,-5.738357970189127) circle (2.0pt);

\draw [fill=uuuuuu] (5.121667804308615,8.877020279810893) circle (2.0pt);
\draw [fill=uuuuuu] (19.61354476, -7.248955540) circle (2.0pt);

\draw [fill=uuuuuu] (1.7753065570277258,-2.626096507717178) circle (2.0pt);
\draw [fill=uuuuuu] (10.18018905245636,1.1951834462874285) circle (2.0pt);
\draw [fill=uuuuuu] (5.002362613469258,-7.574371621169964) circle (2.0pt);
\draw [fill=uuuuuu] (4.912481939192984,-3.538729346165252) circle (2.0pt);

\draw [fill=green] (0.6769335190250809,0.9205506773987393) circle (2.5pt);
\draw [fill=green] (12.97621239524618,0.13680003766967452) circle (2.5pt);
\draw [fill=green] (10.067657827557744,-10.724032404976462) circle (2.5pt);

\draw [fill=orange] (0.5240175483120926,0.3949020280728398) circle (2.5pt);
\draw [fill=orange] (2.487939511999994,-0.17642072500000347) circle (2.5pt);
\draw [fill=orange] (12.25830754861328,0.9356555791751305) circle (2.5pt);
\draw [fill=orange] (11.237679290999997,0.01845098499999805) circle (2.5pt);
\draw [fill=orange] (9.127740996788008,-10.744965964681803) circle (2.5pt);
\draw [fill=orange] (9.06,-7.48) circle (2.5pt);

\draw [line width=1.pt,dotted] (11.237679290999997,0.01845098499999805)
-- (12.25830754861328,0.9356555791751305);

\draw [line width=1.pt,dotted] (9.06,-7.48)
-- (9.127740996788008,-10.744965964681803);

\draw [line width=1.pt,dotted] (2.487939511999994,-0.17642072500000347)
-- (0.5240175483120926,0.3949020280728398);

\draw [line width=2.pt,dashed,color=orange] (17.804855917999998,-7.289238140000015)-- (12.25830754861328,0.9356555791751305);

\draw [line width=1.pt,dotted] (17.804855917999998,-7.289238140000015)-- (19.61354476, -7.248955540);

\draw [line width=1.pt,dotted]
(6.479191485415802,7.3664227118132795)-- (5.121667804308615,8.877020279810893);

\draw [line width=1.pt,dotted] (18.256021058308615,-5.738357970189127)-- (19.61354476, -7.248955540);

\draw [line width=2.pt,dashed,color=orange] (12.25830754861328,0.9356555791751305)-- (4.6705026639999945,7.326140110000009);

\draw [line width=2.pt,dashed,color=orange] (4.6705026639999945,7.326140110000009)-- (0.5240175483120926,0.3949020280728398);

\draw [line width=2.pt,dashed,color=orange] (0.5240175483120926,0.3949020280728398)--
(0.31,-7.68);

\draw [line width=2.pt,dashed,color=orange] (0.31,-7.68)--
(9.127740996788008,-10.744965964681803);

\draw [line width=2.pt,dashed,color=orange] (9.127740996788008,-10.744965964681803)--
(17.8,-7.29);

\draw [fill=zzttqq] (4.6705026639999945,7.326140110000009) circle (3.5pt);
\draw [fill=zzttqq] (17.804855917999998,-7.289238140000015) circle (3.5pt);
\draw [fill=zzttqq] (0.30537635999999335,-7.6789815600000155) circle (3.5pt);

\end{scriptsize}
\end{tikzpicture}}
\caption{Illustration
of the geometric construction with the triangle \begin{color}{red} {$T=[v_1,v_2,v_3]$}\end{color}, the outer hexagon $H_2=[w_{32},w_{31},w_{21},w_{23},w_{13},w_{12}]$, the inscribed hexagon \begin{color}{green} {$H_1=[v_1,q_2,v_3,q_1,v_2,q_3]$}\end{color} and the hexagon \begin{color}{orange} {$H_0=[v_1,p_2,v_3,p_1,v_2,p_3]$}\end{color}. In particular, we have $\|w_{ij}-v_i\|=t_ja_i$ and $\|w_{ij}-w_{kj}\|=(1-t_j)a_j$ for $\{i,j,k\}=\{1,2,3\}$. 
}
\label{Fig1}
\end{figure}
\end{center}

As $H_1\subset K\subset H_2$ and $L(H_0)\leq L(H_1)$, we  have
\begin{align}
\label{KH}
& {L(K)^2}-6\sqrt{3}\,{A(K,-K)}\\
&\qquad\qquad\geq  {L(H_1)^2}-6\sqrt{3}\,{A(H_2,-H_2)}
 \geq  {L(H_0)^2}-6\sqrt{3}\,{A(H_2,-H_2)}.\nonumber
\end{align}

Clearly, $T$ is also a triangle of maximal area contained in $H_1$.
As among convex domains of given area, the maximal area of an inscribed triangle is the smallest for ellipses (see Blaschke \cite{Bla17}, Sas \cite{Sas39} and Schneider \cite[Theorem~10.3.3]{Sch14}), we have
$$
A(T)\geq \frac{3\sqrt{3}}{4\pi} \cdot A(H_1)>0.4 \cdot A(H_1)=0.4(1+t_1+t_2+t_3)A(T),
$$
and hence
\begin{equation}
\label{t1+t2+t3}
t_1+t_2+t_3<1.5.
\end{equation}

The following lemma is the basis for obtaining better bounds on $t_1,t_2,t_3$ if we know that
${L(K)^2}-6\sqrt{3}\,{A(K,-K)}$ is small.

\begin{lemma}
\label{Bekte-Weil-H0H2}
If $a_1=2$,  $a_2, a_3\in [2,2+\frac{1}{6}]$
  and $t_1,t_2,t_3\in[0,\frac{1}{6}]$, then the hexagons $H_0$ and $H_2$ constructed as above satisfy
$$
L(H_0)^2-6\sqrt{3}\, A(H_2,-H_2)\geq
(a_2-2)^2+(a_3-2)^2+(t_1-t_0)^2+(t_2-t_0)^2+(t_3-t_0)^2
$$
for $t_0=(t_1+t_2+t_3)/3$.
\end{lemma}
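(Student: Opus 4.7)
My plan is to turn the claimed inequality into a concrete algebraic one in the five free parameters $a_2,a_3,t_1,t_2,t_3$ and then verify it on the specified box. Applying Minkowski's formula \eqref{MinkowskiP-P} to $H_2$, whose six outer normals are $\pm u_1,\pm u_2,\pm u_3$ (where $u_i$ is the outer unit normal to $a_i$ in $T$), and using $h_{H_2}(u_i)=h_T(u_i)+t_ih_i$, $h_{H_2}(-u_i)=h_T(-u_i)$, $S_{H_2}(u_i)=(1-t_i)a_i$, $S_{H_2}(-u_i)=(t_j+t_k)a_i$, together with $a_ih_i=2A(T)$, a short calculation collapses the mixed area to
\begin{equation*}
A(H_2,-H_2)=2A(T)\bigl(1+t_1t_2+t_1t_3+t_2t_3\bigr).
\end{equation*}
For the perimeter of $H_0$, the apex $p_i$ lies on the perpendicular bisector of $a_i$ at distance $t_ih_i$ above $a_i$, so each of the two sides $[v_j,p_i]$, $[v_k,p_i]$ of $H_0$ has length $\sqrt{(a_i/2)^2+(t_ih_i)^2}$, whence
\begin{equation*}
L(H_0)=\sum_{i=1}^{3}\sqrt{a_i^2+4t_i^2h_i^2},\qquad h_i=\frac{2A(T)}{a_i},
\end{equation*}
with $A(T)$ a Heron polynomial in $a_1=2,a_2,a_3$. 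Writing $F:=L(H_0)^2-6\sqrt{3}\,A(H_2,-H_2)$ and letting $G$ denote the claimed right-hand side, the task reduces to verifying $F\ge G$ on $[2,13/6]^2\times[0,1/6]^3$.

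The key structural observation is the existence of a one-parameter ``valley floor'' $\mathcal{F}:=\{(2,2,t_0,t_0,t_0):t_0\in[0,1/6]\}$ along which both $F$ and $G$ vanish identically. Indeed, on $\mathcal{F}$ the triangle $T$ is equilateral of side $2$, so $A(T)=\sqrt{3}$, $h_i=\sqrt{3}$, $L(H_0)^2=36(1+3t_0^2)$ and $6\sqrt{3}\,A(H_2,-H_2)=36(1+3t_0^2)$, giving $F|_\mathcal{F}=G|_\mathcal{F}=0$. A direct differentiation of the explicit formulas shows, moreover, that every first-order partial derivative of $F$ vanishes on $\mathcal{F}$: for instance $\partial_{a_2}L(H_0)^2|_\mathcal{F}=12(1+3t_0^2)=\partial_{a_2}[6\sqrt{3}\,A(H_2,-H_2)]|_\mathcal{F}$ and $\partial_{t_i}L(H_0)^2|_\mathcal{F}=72t_0=\partial_{t_i}[6\sqrt{3}\,A(H_2,-H_2)]|_\mathcal{F}$. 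The tangent direction $\partial_{t_1}+\partial_{t_2}+\partial_{t_3}$ along $\mathcal{F}$ is simultaneously a null direction of $G$, so both objects behave, to second order, like quadratic forms in the four transverse perturbations $(\alpha_2,\alpha_3,\delta_1,\delta_2,\delta_3)$ with $\alpha_j=a_j-2$, $\delta_i=t_i-t_0$, and $\sum_i\delta_i=0$.

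What remains is to show $F-G\ge 0$ on the whole box. My plan is to compute the Hessian of $F-G$ transverse to $\mathcal{F}$ and verify that the induced quadratic form in the four transverse directions is strictly positive definite with an explicit lower bound; a one-variable test in the $\alpha_2$ direction at $t_0=0$ already illustrates the margin, yielding $F=6\alpha_2^2+O(\alpha_2^3)$ against $G=\alpha_2^2$ and hence $F-G=5\alpha_2^2+O(\alpha_2^3)$, and analogous positive margins appear in the other transverse directions. The main obstacle is that the box is not infinitesimal (widths $1/6$ in each coordinate) and the cubic and higher remainder terms must be bounded uniformly in all five variables so as not to overwhelm the quadratic margin. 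I would handle this by differentiating the explicit formulas up to third order and bounding the remainder using that $A(T)$, $h_i$ and $L(H_0)$ are well-controlled on the box; the authors' explicit reference to validated numerics in the introduction suggests supplementing this analytic estimate, wherever the margin is tight, with rigorous interval arithmetic on a finite subdivision of the box.
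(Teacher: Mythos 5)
Your proposal follows essentially the same route as the paper: the same closed forms for $A(H_2,-H_2)$ and $L(H_0)$, the same degenerate valley $(2,2,t_0,t_0,t_0)$ along which the value and gradient vanish, and a second-order Taylor argument whose transverse Hessian bound over the full box is ultimately delegated to validated interval arithmetic. The only notable difference is that the paper first replaces $L(H_0)^2$ by the algebraically simpler lower bound $(a_1+a_2+a_3)^2+(b_1+b_2+b_3)^2$ via the Minkowski (triangle) inequality and then rotates coordinates so that the computer verification concerns a clean $4\times4$ principal minor of the Hessian, whereas you work with $L(H_0)^2$ directly; this is a simplification of bookkeeping, not of substance.
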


\noindent {\bf Remark}
In the lemma, we do not need $K$, only the triangle $T$ and $t_1,t_2,t_3\geq 0$ are required to define $H_0$ and $H_2$. Moreover, although $H_0$ will be convex in the situation of the lemma, this will not be needed in the argument.

\medskip

\begin{proof} By the translation invariance of the mixed area, we can assume that $v_2$ is the origin.
Then we obtain from \eqref{MinkowskiP-P} that
\begin{align*}
A(H_2,-H_2)&=\frac{1}{2}\left\{(1-t_2)a_2\cdot 0+(t_1+t_2)a_3t_3h_3+(1-t_1)a_1h_1\right.\\
&\left.\qquad\qquad +(t_1+t_3)a_2h_2(1+t_2)+
(1-t_3)a_3h_3+(t_2+t_3)a_1h_1t_1\right\}\\
&=2A(T)(1+t_1t_2+t_2t_3+t_3t_1).\\
\intertext{By Heron's formula,}
2A(T)&=a_1h_1=a_2h_2=a_3h_3\\
&=\frac12\sqrt{(a_1+a_2+a_3)(-a_1+a_2+a_3)(a_1-a_2+a_3)(a_1+a_2-a_3)},\\
\intertext{and in addition we have}
L(H_0)&=
\sum_{i=1}^3\sqrt{a_i^2+4t_i^2h_i^2}.
\end{align*}
Setting $b_i=2t_ih_i=\frac{4\,A(T)t_i}{a_i}$ for $i=1,2,3$, it follows from the
 Minkowski inequality (or equivalently, the triangle inequality for $(a_i,b_i)$, $i=1,2,3$) that
\begin{align*}
L(H_0)^2&=\left(\sum_{i=1}^3\sqrt{a_i^2+b_i^2}\right)^2\\
&\geq (a_1+a_2+a_3)^2+(b_1+b_2+b_3)^2\\
&=(a_1+a_2+a_3)^2+16\,A(T)^2(t_1/a_1+t_2/a_2+t_3/a_3)^2\\
&=:f_1(a_2,a_3,t_1,t_2,t_3).
\end{align*}
For the subsequent analysis, we set  $f_2(a_2,a_3,t_1,t_2,t_3):=16A(T)^2$, hence
$$
f_2(a_2,a_3,t_1,t_2,t_3)=(a_1+a_2+a_3)(-a_1+a_2+a_3)(a_1-a_2+a_3)(a_1+a_2-a_3)
$$
and
$$
f_1(a_2,a_3,t_1,t_2,t_3)=(a_1+a_2+a_3)^2+f_2(a_2,a_3,t_1,t_2,t_3)(t_1/a_1+t_2/a_2+t_3/a_3)^2,
$$
and finally we set
$$
f(a_2,a_3,t_1,t_2,t_3):=f_1(a_2,a_3,t_1,t_2,t_3)-3\sqrt{3}\sqrt{f_2(a_2,a_3,t_1,t_2,t_3)}\,(1+t_1t_2+t_2t_3+t_3t_1).
$$
Thus we obtain
\begin{equation}
\label{LAf}
L(H_0)^2-6\sqrt{3}\cdot A(H_2,-H_2)\geq f(a_2,a_3,t_1,t_2,t_3),
\end{equation}
In the following, we consider
\begin{align*}
W&:=\left\{(a_2,a_3,t_1,t_2,t_3)^\top\in\R^5:\,\mbox{$a_2, a_3\in [2,2+\frac16]$
  and $t_1,t_2,t_3\in[0,\frac16]$}\right\},\\
z_t&:=(2,2,t,t,t)^\top,\quad t\in [0,\tfrac{1}{6}],
\end{align*}
and the orthonormal basis
\begin{align*}
e_1&=(1,0,0,0,0)^\top,\quad
e_2=(0,1,0,0,0)^\top,\quad
e_3=\left(0,0,\frac1{\sqrt{2}},\frac{-1}{\sqrt{2}},0\right)^\top,\\
e_4&=\left(0,0,\frac1{\sqrt{6}},\frac1{\sqrt{6}},\frac{-2}{\sqrt{6}}\right)^\top,\quad
e_5=\left(0,0,\frac1{\sqrt{3}},\frac1{\sqrt{3}},\frac1{\sqrt{3}}\right)^\top.
\end{align*}
We write $Df$ for the derivative and $D^2f$ to denote the Hessian of $f$.
Using  a computer algebra system (for convenience) or direct calculations, we obtain that
if $t\in[0,\tfrac{1}{6}]$, then
\begin{equation}
\label{fclose}
f(z_t)=0,\quad Df(z_t)=o.
\end{equation}
By the Taylor formula and (\ref{fclose}) there is a
$\xi\in(0,1)$ such that
$$
f(x)=\frac12\,\left\langle x-z_t,D^2f(z_t+\xi(x-z_t)) (x-z_t)\right\rangle,\quad x\in W.
$$
By relation \eqref{LAf}, Lemma~\ref{Bekte-Weil-H0H2} follows once we have shown that
\begin{equation}
\label{fLemma31}
f(x) \geq \|x-z_{t}\|^2
\end{equation}
for $x=(a_2,a_3,t_1,t_2,t_3)^\top\in W$ and $t=(t_1+t_2+t_3)/3$.

Since for $x=(a_2,a_3,t_1,t_2,t_3)^\top\in W$, we have $x-z_{t}\in e_5^\perp$ (we write $e_5^\perp$ for the orthogonal complement of $e_5$)
with $t=(t_1+t_2+t_3)/3$ and $z_t+\xi(x-z_t)\in W$, the
proof will be finished if we can verify that
\begin{equation}\label{evbound}
\langle v, D^2f(\bar{x}) v\rangle\ge 2\|v\|^2,\quad \bar{x}\in W,v\in e_5^\perp.
\end{equation}

Using a computer algebra system
(such as SageMath or Maple) or tedious calculations, we obtain that
$$D^2f(2,2,0,0,0)=
\left(\begin{array}{rrrrr}
12 & -6 & 0& 0 & 0 \\
-6 & 12 & 0& 0 & 0 \\
0  & 0 & 24 & -12 & -12 \\
0 & 0 & -12 & 24 & -12 \\
0 & 0 & -12 & -12 & 24
\end{array}\right)
$$
has the eigenvalues
$6,18,36,36,0$, and as associated pairwise orthogonal eigenvectors, one can choose $(1,1,0,0,0)^\top$ to correspond to the eigenvalue $6$,
$(1,-1,0,0,0)^\top$ to correspond to $18$, $e_3$ and $e_4$  to correspond to $36$, and finally
$e_5$ to correspond to $0$.

Define the orthogonal matrix $S:=(e_1\ldots e_5)\in \Oh(5)$ and write $y=(y_1,\ldots,y_5)^\top\in\R^5$. Further, we define
$$
\tilde{f}(y_1,\ldots,y_5):=f\left(\sum_{i=1}^5 y_ie_i\right)=f(Sy),
$$
hence $f(x)=\tilde{f}(S^\top x)$, $x\in\R^5$.
By the chain rule, for $x\in W$ and $v\in\R^5$ we have
$$
\langle v, D^2f(x) v\rangle=\langle S^\top v,D^2\tilde{f}(S^\top x)S^\top v\rangle.
$$
In addition, note that $\|S^\top v\|^2=\|v\|^2$ and
\begin{align*}
S^\top(W)&\subset \left[2,2+\tfrac{1}{6}\right]^2\times \left[-\tfrac{1}{6}\sqrt{\tfrac{2}{3}},\tfrac{1}{6}\sqrt{\tfrac{2}{3}}\right]^2\times \left[0,\tfrac{\sqrt{3}}{6}\right]\\
&
\subset \widetilde{W}:=\left[2,2+\tfrac{1}{6}\right]^2\times [-0.14,0.14]^2\times[0,0.3].
\end{align*}
Here we use that for $y=S^\top (a_2,a_3,t_1,t_2,t_3)^\top$ we have
\begin{align*}
y_3^2+y_4^2&=\frac{1}{2}(t_1-t_2)^2+\frac{1}{6}(t_1+t_2-2t_3)^2=\frac{2}{3}\frac{1}{2}\left[(t_3-t_2)^2+(t_3-t_1)^2+(t_2-t_1)^2\right]\\
&\le \frac{2}{3}\max\{t_1,t_2,t_3\}^2=\frac{2}{3}\frac{1}{6^2}
\end{align*}
and
$$
0\le y_5=\frac{1}{\sqrt{3}}(t_1+t_2+t_3)\le \sqrt{3}\cdot\frac{1}{6}.
$$

Moreover, $v\in  e_5^\bot$ if and only if $\langle S^\top v,e_5^\circ\rangle=0$, where $e_5^\circ=(0,0,0,0,1)^\top$.
Hence, \eqref{evbound} follows if we can verify that
$$
\langle \tilde{v},D^2\tilde{f}(y) \tilde{v}\rangle\ge 2\|\tilde{v}\|^2, \quad y\in\widetilde{W},\, \tilde{v}\in (e_5^\circ)^\bot.
$$
Writing $H(y):=\left(D^2\tilde{f}(y)_{ij}\right)_{i,j=1}^{n-1}$ for the $(n-1)\times(n-1)$ matrix (principal minor) obtained from the $n\times n$ Hessian matrix representing  $D^2\tilde{f}(y)$ with respect to the standard basis $e_1^\circ,\ldots,e_5^\circ$ of $\R^5$, we want to verify that
\begin{equation}\label{eeq1}
\langle \bar{v}, H(y) \bar{v}\rangle\ge 2\|\bar{v}\|^2,\quad y\in\widetilde{W},\, \bar{v}\in \R^4,
\end{equation}
that is to say, all eigenvalues of $H(y)$ are at least $2$. Let
$$
\Xi:=\left\{(\bar{v}_1,\ldots,\bar{v}_4)^\top\in [-1,1]^4:\bar{v}_i=1\text{ for some $i\in\{1,\ldots,4\}$}\right\}.
$$
By scaling invariance of \eqref{eeq1} with respect to  $\bar{v}\in \R^4$, \eqref{eeq1} is equivalent to
\begin{equation}\label{eq:extraevbound}
\langle \bar{v}, H(y) \bar{v}\rangle\ge 2\|\bar{v}\|^2,\quad y\in\widetilde{W},\, \bar{v}\in {\color{red}\Xi},
\end{equation}
Since all eigenvalues of $H((2,2,0,0,0)^\top)$ are positive, this holds if and
only if all eigenvalues of $H(y)^2$ are at least $4$, for $y\in\widetilde{W}$. The latter means that we have to show that
$\langle \bar{v}, H(y)^2 \bar{v}\rangle\ge 4\|\bar{v}\|^2$ for $y\in\widetilde{W}$ and $ \bar{v}\in \R^4$
or, equivalently,
\begin{equation}\label{evbound2}
\| H(y) \bar{v}\|^2\ge 4\|\bar{v}\|^2,\quad y\in\widetilde{W}, \, \bar{v}\in \R^4.
\end{equation}
Again by scaling invariance of \eqref{evbound2} with respect to $\tilde{v}$, \eqref{evbound2} is in turn equivalent to
\begin{equation}\label{evbound3}
\| H(y) \bar{v}\|^2\ge 4\|\bar{v}\|^2,\quad y\in\widetilde{W},\, \bar{v}\in \Xi.
\end{equation}

Direct rigorous numerical analysis of the eigendecomposition
of the hessian $D^2\tilde{f}$ may be challenging
due to requiring too many subdivisions of $\widetilde{W}$ in order
to achieve the required precision \cite{rump, hladik, jp, hartman}.
As both, $\widetilde{W}$ and $\Xi$ are compact, finite dimensional
and as the desired inequalities, either \eqref{eq:extraevbound}
or \eqref{evbound3}, are expected to be strict,
they are well suited for being studied by rigorous numerics \cite{moore, alefeld, warwick}.

Namely,
for small $\widetilde{W}' \subset \widetilde{W}$,
and $\Xi' \subset \Xi$, we
preform the following procedure with
all computations being carried out rigorously using
interval arithmetic and automatic differentiation
\cite{moore, alefeld, warwick, griewank}.
First, we bound the jet of $\tilde{f}$ up to Taylor-coefficients
of degree $6$ over $\widetilde{W}'$.
In order to increase precision and eliminate
some of the dependency issues,
for a given $\widetilde{W}'$,
the degree 6 jet of $f$ is bound
both over $\widetilde{W}'$ and over the midpoint of $\widetilde{W}'$.
Hence, using the multivariate Taylor-expansions with the
appropriate remainder term, we obtain enhanced bounds on the
Taylor-coefficients of $\tilde{f}$ over $\widetilde{W}'$ and, in turn,
a better enclosure of the hessian matrix $\mathrm{H}=D^2\tilde{f}$.
Second, we test if we can guarantee that
the inequality \eqref{eq:extraevbound}
or \eqref{evbound3} for all $v \in \Xi'$ holds.
If that is not the case,
then an adaptive bisection scheme of $\widetilde{W}' \times \Xi'$
is utilized and the arising subsets of $\widetilde{W}' \times \Xi'$
are processed separately.

We have implemented our software using the package CAPD \cite{capd} and
verified both inequalities independently and successfully.
The required number of
subsets (of $\widetilde{W} \times \Xi$) and the associated computational
times (without parallelization on an i7-9750) were
\begin{itemize}
    \item \eqref{eq:extraevbound}: $25880$ subsets, 8m14s;
    \item \eqref{evbound3}: $2440$ subsets, 46s.
\end{itemize}
We note that the increased complexity of \eqref{eq:extraevbound} is
most likely just an artefact of the naive computation of the inner product
and could be decreased (to that of \eqref{evbound3}) by choosing a more
efficient evaluation scheme.
The source code and output logs are available at \cite{fb-code}.

In particular, both \eqref{eq:extraevbound}, \eqref{evbound3}, and, in turn, Lemma~\ref{Bekte-Weil-H0H2} have been verified.
\end{proof}

The following two claims will be used in the proof of Proposition~\ref{Betke-Weil-local-stab}.

\begin{claim}
\label{TinT0}
If the  regular triangle $T_0$ of side length $b$ contains a triangle $T$ which has a side of length at most $a$, where
$\frac{b}2\leq a\leq b$, then $A(T)\leq \frac{a}{b}\,A(T_0)$.
\end{claim}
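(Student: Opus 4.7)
The plan is to use a side $e$ of $T$ with length $\ell\le a$ as a base, with opposite vertex $p$; writing $h$ for the distance from $p$ to the line $\ell^*$ through $e$, we have $A(T)=\tfrac{1}{2}\ell h$. Setting $h_0=\tfrac{\sqrt 3}{2}b$, so $A(T_0)=\tfrac{1}{2}bh_0$, the inequality to be proved is equivalent to the geometric inequality $\ell h\le a h_0$. Let $L$ denote the length of the chord $\ell^*\cap T_0$ and $H^+$ the maximum distance from $\ell^*$ to points of $T_0$ on the side containing $p$. Then $\ell\le L$ and $h\le H^+$, so the task reduces to bounding the product $\min(L,a)\cdot H^+$ by $a h_0$.

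My approach is a case analysis on the position of $\ell^*$ within $T_0$. Assuming generically that $\ell^*$ avoids the vertices of $T_0$ (the remaining case following by continuity), $\ell^*$ cuts $T_0$ into a triangular piece containing one vertex $v_0$ and a quadrilateral piece containing the other two. Let $t_1,t_2\in(0,1)$ denote the fractional positions at which $\ell^*$ crosses the edges of $T_0$ incident to $v_0$; the $60\degre$ angle at $v_0$ and the law of cosines yield $L=b\sqrt{t_1^2-t_1t_2+t_2^2}$, and a short computation (via the areas of the two pieces) gives the distance from $v_0$ to $\ell^*$ as $bt_1t_2h_0/L$, together with analogous closed-form expressions for the distances from the other two vertices. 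The easy case is when $p$ and $v_0$ lie on the same side of $\ell^*$: then $H^+=bt_1t_2h_0/L$, and after assuming $t_1\le t_2$ by symmetry the bound $H^+\le h_0$ reduces to $t_1^2t_2^2\le t_1^2-t_1t_2+t_2^2$, i.e., to the obvious $t_1^2(1-t_2^2)+t_2(t_2-t_1)\ge 0$, whence $\ell h\le aH^+\le ah_0$.

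The harder case is when $p$ and $v_0$ lie on opposite sides of $\ell^*$. Assuming $t_2\ge t_1$ by symmetry, one obtains $H^+=b(1-t_1)t_2h_0/L$ and $LH^+=bh_0(1-t_1)t_2$. The main obstacle here is that $H^+$ can genuinely exceed $h_0$ when the chord is short and close to $v_0$, so the naive bound $h\le H^+\le h_0$ fails outright and one must exploit $\ell\le L$ instead. The resolution is a case split on $t_2$. If $t_2<\tfrac{1}{2}$, then $L\le bt_2<b/2\le a$ and $b(1-t_1)t_2<bt_2<b/2\le a$, giving $\ell h\le LH^+\le ah_0$. If $t_2\ge\tfrac{1}{2}$, a short algebraic calculation shows $b(1-t_1)t_2\le L$ (this being equivalent to the nonnegativity of $t_1[t_1(1-t_2)(1+t_2)+t_2(2t_2-1)]$, each bracketed term clearly being $\ge 0$), which yields $H^+\le h_0$ and again $\ell h\le ah_0$. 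The hypothesis $a\ge b/2$ enters precisely in the estimates of the form $bt_2<b/2\le a$ above and is used sharply: equality in the claim corresponds to $T$ having a side of length exactly $a$ on a side of $T_0$ with the opposite vertex at the opposite vertex of $T_0$.
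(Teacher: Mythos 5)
Your argument is correct, and in its difficult case it takes a genuinely different route from the paper's. You share the first reduction (bound the base by $a$ and the height $h$ by the maximal distance $H^+$ from the base line to points of $T_0$ on the apex side, and note that $H^+\le\tfrac{\sqrt3}{2}b$ finishes the proof), but you then push through an explicit parametrization of the chord by $(t_1,t_2)$ and check the remaining inequalities algebraically. The paper instead first normalizes the apex of $T$ to be a vertex $v$ of $T_0$ (which is where $H^+$ is attained) and, when $h>\tfrac{\sqrt3}{2}b$, observes that the base is then disjoint from the disc of radius $\tfrac{\sqrt3}{2}b$ about $v$; since $T_0$ minus that disc has two connected components separated by the altitude from $v$, the base --- and hence all of $T$ --- lies in one of the two halves of $T_0$ cut by that altitude, giving $A(T)\le\tfrac12 A(T_0)\le\tfrac{a}{b}A(T_0)$. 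That disc argument replaces your entire ``harder case'' (the subcases $t_2<\tfrac12$ and $t_2\ge\tfrac12$) with two sentences and no computation. What your version buys is closed-form control of $L$ and $H^+$ in terms of $(t_1,t_2)$ and a precise account of where $a\ge b/2$ enters; note that both proofs ultimately use that hypothesis only through the single inequality $b/2\le a$. Your appeal to continuity for degenerate positions of $\ell^*$ is harmless, since the claimed inequality is a closed condition and generic small perturbations of $T$ inside $T_0$ move $\ell^*$ off the vertices of $T_0$.
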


\begin{proof} Let $\ell$ be the line containing a side of $T$ of length at most $a$.
We may assume that the vertex $v$ of $T$ opposite to $\ell\cap T$ is also a vertex of $T_0$.

If the distance of $\ell$ from $v$ (that is, the height of $T$) is at most $b\sqrt{3}/2$, then we are done.
Therefore we may now assume that the distance of $\ell$ from $v$ is larger than $b\sqrt{3}/2$.
Let $D_v$ be the circular disc with center $v$ and radius $b\sqrt{3}/2$. Since the side $\ell\cap T$ is disjoint from $D_v$, it lies in one of the two connected components of $T_0\setminus D_v$. It follows that $T$ is contained in one of the two triangles which are obtained by cutting $T_0$ into two sub-triangles by the height emanating from $v$, thus
$A(T)\leq \frac{1}{2}\,A(T_0)$.
\end{proof}

\begin{claim}
\label{triangleineq}
If $\varrho_1,\varrho_2,\varrho_3$ are the side lengths of a triangle $P$  and $A(P)\geq \xi\,  \varrho_1$,  for some $\xi\in[0,\varrho_1]$, then
$$
\varrho_2+\varrho_3\geq \varrho_1+\frac{\xi^2}{\varrho_1}.
$$
\end{claim}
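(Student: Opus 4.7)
The plan is to parametrise the triangle $P$ by the length $\varrho_1$ of one side and the altitude $h$ dropped to it, reduce the desired estimate to a one-variable inequality in $h$, and finish by a routine algebraic check exploiting the hypothesis $\xi\leq \varrho_1$. The argument is short and elementary, and no real obstacle is anticipated.

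First, I would place $P$ in coordinates so that its side of length $\varrho_1$ runs from $(0,0)$ to $(\varrho_1,0)$ and the opposite vertex is at $(x,h)$ with $h\geq 0$. The hypothesis $A(P)=\tfrac12\varrho_1 h\geq \xi\,\varrho_1$ gives $h\geq 2\xi$. The two remaining sides $\varrho_2,\varrho_3$ are the lengths of the vectors $(\varrho_1-x,h)$ and $(x,h)$, whose sum is $(\varrho_1,2h)$, so the triangle inequality in $\R^2$ yields
\begin{equation*}
\varrho_2+\varrho_3\geq \sqrt{\varrho_1^2+4h^2}\geq \sqrt{\varrho_1^2+16\xi^2}.
\end{equation*}

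It then suffices to show $\sqrt{\varrho_1^2+16\xi^2}\geq \varrho_1+\xi^2/\varrho_1$. Both sides being positive, squaring reduces this to $16\xi^2\geq 2\xi^2+\xi^4/\varrho_1^2$, i.e.\ $14\,\varrho_1^2\geq \xi^2$, which is implied by $\xi\leq \varrho_1$.

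The only point worth flagging is the choice of which two vectors to combine in the Minkowski step: adding $(x,h)$ and $(\varrho_1-x,h)$, rather than $(x,h)$ and $(\varrho_1-x,-h)$ (which would only recover the trivial $\varrho_2+\varrho_3\geq \varrho_1$), is what carries the altitude $h$ into the bound. Once this choice is made, the estimate $\varrho_2+\varrho_3\geq\sqrt{\varrho_1^2+4h^2}$ is sharp (equality for the isosceles triangle with $x=\varrho_1/2$) and, combined with $h\geq 2\xi$, comfortably exceeds the target, so the final algebraic verification has generous slack.
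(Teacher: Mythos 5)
Your proof is correct and follows essentially the same route as the paper's: both reduce to the bound $\varrho_2+\varrho_3\geq\sqrt{\varrho_1^2+4h^2}$ with $h\geq 2\xi$ and finish with the same algebraic comparison using $\xi\leq\varrho_1$. Your vector-addition justification of the "isosceles minimizes" step is a slightly more explicit version of what the paper asserts without proof.
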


\begin{proof} The height $h$ of $P$ corresponding to $\varrho_1$ is at least $2\xi$, and $\varrho_2+\varrho_3$ is
minimized under this condition if $\varrho_2=\varrho_3$, thus
$$
\varrho_2+\varrho_3\geq 2\sqrt{\left(\frac{\varrho_1}2\right)^2+h^2}\geq
\sqrt{\varrho_1^2+16\xi^2}=\varrho_1\sqrt{1+\frac{16\xi^2}{\varrho_1^2}}\geq \varrho_1+\frac{\xi^2}{\varrho_1},
$$
which proves the claim.
\end{proof}

\section{Local stability}
\label{seclocal}

For any convex domain $K$, let
$d_{\rm tr}(K)$ be the minimal $\rho\geq 0$ such that
there exists a regular triangle $T$ with centroid $z$ satisfying
$$
T-z\subset K-z\subset (1+\rho)(T-z).
$$
In particular, $d_{\rm tr}(K)$ measures how close $K$ is to a suitable regular triangle.

\begin{prop}
\label{Betke-Weil-local-stab}
Suppose that  $K$ is a convex domain with $d_{\rm tr}(K)\leq 6^{-2}$ and
\begin{equation}\label{locref1}
L(K)^2\leq (1+\varepsilon)6\sqrt{3}\,A(K,-K)
\end{equation}
for some $\varepsilon\in[0,(6\cdot 180)^{-2}]$. Then $d_{\rm tr}(K)\leq 400\sqrt{\varepsilon}$.
\end{prop}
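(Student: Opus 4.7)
The plan is to translate the closeness of the Betke--Weil deficit to zero into quantitative control on the shape parameters of the inscribed maximum-area triangle $T\subset K$ and the associated hexagons $H_0,H_2$ via Lemma~\ref{Bekte-Weil-H0H2}, and then to convert those parameter bounds into the desired containment $d_{\rm tr}(K)\le 400\sqrt\varepsilon$.

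First, I would normalise: after a homothety I may label the sides of $T$ so that $a_1\le a_2\le a_3$ with $a_1=2$. The hypothesis $d_{\rm tr}(K)\le 6^{-2}$ is then used not to conclude anything about the Betke--Weil deficit but to verify the numerical range required by Lemma~\ref{Bekte-Weil-H0H2}, namely $a_2,a_3\in[2,2+\tfrac16]$ and $t_1,t_2,t_3\in[0,\tfrac16]$. Since any regular triangle $T^*$ witnessing $d_{\rm tr}(K)\le 6^{-2}$ has area within a factor $(1+1/36)^2$ of $A(T)$ and sits inside $(1+1/36)T^*$, a short elementary argument (using that $T$ has maximal area in $K$) shows that $T$ cannot be too far from equilateral and that the widths of $K$ orthogonal to the sides of $T$ cannot exceed $(1+1/6)h_i$.

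Second, I would combine the main chain \eqref{KH} with Lemma~\ref{Bekte-Weil-H0H2} and the hypothesis \eqref{locref1}:
\begin{equation*}
(a_2-2)^2+(a_3-2)^2+\sum_{i=1}^3(t_i-t_0)^2 \;\le\; L(K)^2-6\sqrt 3\,A(K,-K)\;\le\; 6\sqrt 3\,\varepsilon\,A(K,-K)\;\le\; C\varepsilon,
\end{equation*}
for an absolute constant $C$ depending only on the normalisation. This immediately yields $|a_i-2|=O(\sqrt\varepsilon)$ and $|t_i-t_0|=O(\sqrt\varepsilon)$, so $T$ is $O(\sqrt\varepsilon)$-close to the equilateral triangle $T^\bullet$ of side~$2$ and the three parameters $t_i$ cluster tightly around their common mean $t_0$.

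The principal obstacle is that Lemma~\ref{Bekte-Weil-H0H2} carries \emph{no} information about $t_0$ itself: the flat direction $e_5$ shows $f(z_t)=0$ for all admissible $t$, so uniformly increasing all $t_i$ together is invisible to the Hessian estimate, yet it is precisely $t_0$ that measures how much the circumscribed hexagon $H_2$ protrudes beyond $T$. To close this gap I would exploit the hypothesis in a more refined way. Using $A(K,-K)\le A(H_2,-H_2)=2A(T)(1+t_1t_2+t_2t_3+t_3t_1)$ together with a sharpened lower bound on $L(K)$ obtained from the fact that, by the very definition of $t_i$, the convex set $K$ must contain a point at perpendicular distance $t_ih_i$ past the side $a_i$, one shows $L(K)^2 \ge L(T)^2+\Omega(t_0^2)$. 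Substituting into \eqref{locref1} and using the near-equality $L(T)^2 \approx 6\sqrt 3\cdot 2A(T)$ (valid up to $O(\sqrt\varepsilon)$ since $T$ is close to equilateral by the previous paragraph) yields $t_0^2=O(\varepsilon)$, hence $t_0=O(\sqrt\varepsilon)$.

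Finally, with all five shape parameters bounded by $O(\sqrt\varepsilon)$, I would choose as approximating regular triangle the equilateral $T_0$ of side~$2$ concentric with $T$ and oriented so that $T_0\subset T$. Then $T_0\subset T\subset K\subset H_2$, and $H_2$ itself is contained in the triangle obtained from $T$ by displacing each side outward by $t_ih_i=O(\sqrt\varepsilon)$, which is a homothetic copy of $T_0$ with factor $1+O(\sqrt\varepsilon)$ about the common centroid. Thus $d_{\rm tr}(K)\le C_0\sqrt\varepsilon$ for some absolute $C_0$. The specific constant $400$ is then a matter of tracking the universal constants coming from Lemma~\ref{Bekte-Weil-H0H2}, the protrusion-perimeter estimate, and the conversion from $T$ to its inscribed equilateral $T_0$; this bookkeeping is routine but must be done carefully to stay within the regime $\varepsilon\le (6\cdot180)^{-2}$.
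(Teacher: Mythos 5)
Your first two paragraphs and your final paragraph track the paper's Step~1 and Step~3 accurately: the hypothesis $d_{\rm tr}(K)\le 6^{-2}$ is indeed used only to place $(a_2,a_3,t_1,t_2,t_3)$ in the box where Lemma~\ref{Bekte-Weil-H0H2} applies, the chain \eqref{KH} plus the lemma gives $|a_i-2|=O(\sqrt\varepsilon)$ and $|t_i-t_0|=O(\sqrt\varepsilon)$, and once one also knows $t_0=O(\sqrt\varepsilon)$ the containment $T_0\subset K\subset(1+O(\sqrt\varepsilon))T_0$ follows as you describe. You also correctly isolate the real difficulty: the flat direction $e_5$ leaves $t_0$ uncontrolled.

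However, your proposed resolution of that difficulty in the third paragraph does not work. The only lower bound on $L(K)$ extractable from ``$K$ contains a point at perpendicular distance $t_ih_i$ beyond $a_i$'' is $L(K)\ge L(H_1)\ge L(H_0)$, and by the Minkowski inequality $L(H_0)^2\ge (\sum a_i)^2+(\sum 2t_ih_i)^2$, so your $\Omega(t_0^2)$ gain has constant exactly $(\sum 2h_i)^2/9\approx 108$ when $T$ is near-equilateral of side $2$. But the upper bound you pair it with, $6\sqrt3\,A(K,-K)\le 6\sqrt3\,A(H_2,-H_2)=6\sqrt3\cdot 2A(T)(1+t_1t_2+t_2t_3+t_3t_1)\approx 36(1+3t_0^2)=36+108t_0^2$, grows by \emph{the same} $108t_0^2$: this is precisely the statement $f(z_t)=0$ for all $t\in[0,\frac16]$. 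The two $t_0^2$ terms cancel to leading order and you are left with $0\le O(\varepsilon)(1+O(t_0^2))$, which gives no bound on $t_0$ whatsoever. (The hexagon example in the introduction makes this concrete: there $L(H_0)^2=6\sqrt3\,A(H_2,-H_2)$ identically in $t$.) The missing idea is that one must show $A(K,-K)$ is strictly \emph{smaller} than $A(H_2,-H_2)$ by an amount of order $t_0^2$. The paper does this indirectly: assuming $\max_i t_i>100\sqrt\varepsilon$, it first shows $L(K)-L(H_1)\le 5\varepsilon$ forces each $q_i$ to lie within $0.06$ of the midpoint $p_i$ (Step~4), then that $K\subset D=\frac12H_1+\frac12H_2$ (Step~5, again via a perimeter-increment argument), and finally computes via Betke's formula that $A(H_2,-H_2)-A(D,-D)=\frac12A(T)(t_1t_2+t_2t_3+t_3t_1)\gtrsim 10^4\varepsilon$, contradicting the bound $A(H_2,-H_2)-A(K,-K)\le 5\varepsilon$ from Step~2. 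Without some substitute for Steps~4--6, your argument cannot bound $t_0$ and the proof is incomplete at its central point.
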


\begin{proof}
Let $d_{\rm tr}(K)=\eta\leq 6^{-2}$ and $\varepsilon$ be as in the statement of the proposition.
There exists a regular triangle $T_0$ of side length $b$ containing $K$ such that
a translate of $\frac1{1+\eta}\,T_0$ is contained in $K$. For a triangle $T\subset K$  of maximal area contained in $K$,
we have
\begin{equation}
\label{TT0eta}
A(T)\geq \frac{A(T_0)}{(1+\eta)^2}>\frac{A(T_0)}{1+3\eta}.
\end{equation}

From now on, we use the notions and auxiliary constructions introduced for $K$ and $T$
at the beginning of Section~\ref{secpolytopes}, including
the hexagons $H_0,H_1,H_2$, the parameters $t_1,t_2,t_3\geq 0$, etc.

The main part of the proof is divided into several steps. In Step~1, we prove that
$T$ is $\sqrt{\varepsilon}$ close to a regular triangle,
Step~2 shows that $A(H_2,-H_2)-A(K,-K)$ is $\varepsilon$ small.
Based on these findings, Step~3 verifies that
 if $H_2$ is close
to $T$ in the sense that
$\max\{t_1,t_2,t_3\}\leq 100\,\sqrt{\varepsilon}$
  (see (\ref{tismall})), then Proposition~\ref{Betke-Weil-local-stab} holds.

The rest of the argument is indirect. Starting from Step~4, we assume that the assumption
$\max\{t_1,t_2,t_3\}>100\,\sqrt{\varepsilon}$
 (see (\ref{tibig0})) is satisfied and derive a contradiction.
Under this assumption, we prove in Step~4 that $H_1$ is reasonably close to $H_0$ in the
sense that
$\|p_i-q_i\|$ is reasonably small for $i=1,2,3$ (see (\ref{piqi})).
Then Step~5 verifies that $K\subset D=\frac12\,H_1+\frac12\,H_2$
and clearly $D\subset H_2$.
Finally, in Step~6, we prove that the gap between
$A(H_2,-H_2)$ and $A(D,-D)$ (and hence the gap between
$A(H_2,-H_2)$ and $A(K,-K)$ by Step~5) is too large, which yields the desired contradiction.

\bigskip

\noindent {\bf Step 1} {\it $T$ is $\sqrt{\varepsilon}$ close to a regular triangle.}

\medskip

By scaling invariance of the assertion (and symmetry), we may assume that the side lengths of $T$ satisfy
$$
a_1\leq a_2\leq a_3 \mbox{ \ and \ }a_1=2.
$$
Assuming $a_1<\frac{b}{2}$, we can apply Claim~\ref{TinT0} with $a=b/2$ and get
$$
A(T)\le \frac{1}{2}A(T_0)\le \frac{1}{2}   (1+\eta)^2 A(T)
$$
by (\ref{TT0eta}), which is a contradiction, since $\eta\le 6^{-2}$. Hence, we have $\frac{b}{2}\le a_1\le b$,
and another application of Claim~\ref{TinT0} now yields that
$$
A(T)\le \frac{a_1}{b}A(T_0)\le \frac{a_1}{b}(1+3\eta)A(T),
$$
so that
$$\frac{b}{1+3\eta}\leq a_1=2\leq a_3\leq b,$$
and hence
\begin{equation}
\label{airatio}
 a_3\leq b\le 2(1+3\eta)= 2+6\eta\leq 2+\frac{1}{6}.
\end{equation}
It also follows from (\ref{TT0eta}) that
$$
(1+t_1+t_2+t_3)A(T)=A(H_1)\leq A(K)\leq A(T_0)<(1+3\eta)A(T),
$$
therefore
\begin{equation}
\label{titau}
 t_i<\frac{1}{12}<\frac{1}{6} \mbox{ \ for $i=1,2,3$.}
\end{equation}
Thus, the condition $d_{\rm tr}(K)\le 6^{-2}$ ensures that Lemma~\ref{Bekte-Weil-H0H2} can be applied.

Hence, by a combination of \eqref{KH} with Lemma~\ref{Bekte-Weil-H0H2} and the assumption \eqref{locref1} of the proposition, we see that
\begin{align}
\label{aiclose20}
(a_i-2)^2&\leq 6\sqrt{3}\,A(K,-K)\varepsilon< 11\,A(K,-K)\varepsilon\mbox{ \ for $i=2,3$},\\
\label{ticloset0}
(t_i-t)^2&\leq 6\sqrt{3}\,A(K,-K)\varepsilon< 11\,A(K,-K)\varepsilon \mbox{ \ for }t=\tfrac{1}{3}(t_1+t_2+t_3), i=1,2,3.
\end{align}
To estimate $A(K,-K)$, we observe that the height $h_1$ of $T$ corresponding to the side $a_1=2$
satisfies $h_1\leq \sqrt{a_3^2-1}<2$, thus $A(T)\leq \sqrt{a_3^2-1}<2$ by (\ref{airatio}), and hence
\begin{equation}
\label{AK-K}
A(K,-K)\leq A(T_0,-T_0)=2A(T_0)\leq 2(1+3\eta)A(T)<\frac{39}{9}<5.
\end{equation}
Now (\ref{aiclose20}) and (\ref{ticloset0}) imply that
\begin{align}
\label{aiclose2}
2\leq a_i&\leq 2+7\sqrt{\varepsilon}\mbox{ \ \ \ for $i=2,3$},\\
\label{ticloset}
|t_i-t|&\leq 7\sqrt{\varepsilon}\mbox{ \ \ \ for $i=1,2,3$}.
\end{align}
We observe that $\frac{a_1^2+a_2^2-a_3^2}{a_2}$ is an increasing function of $a_2\geq 2$ as $a_3\geq a_1$.
Writing $\alpha_i$ to denote the angle of $T$ opposite to the side  $a_i$ and using that
$7^2\sqrt{\varepsilon}\le 1$, we have
$$
\cos\alpha_3=\frac{a_1^2+a_2^2-a_3^2}{2a_1a_2}\geq \frac{8- (2+7\sqrt{\varepsilon})^2}{8}\geq
\frac{4-29\sqrt{\varepsilon}}{8}\geq \frac12-4\sqrt{\varepsilon}>0.
$$
In particular, we have $\alpha_1\le \alpha_2\le\alpha_3<\pi/2$, $\alpha_3\ge \pi/3$ and $\alpha_1\le \pi/3$.

Since $\cos'( s)=-\sin s\leq -\sqrt{3}/2$ if $s\in [\frac{\pi}3,\frac{\pi}2]$, the mean value theorem then  implies that
$$
\alpha_3\leq \frac{\pi}3+\frac2{\sqrt{3}}\, 4\sqrt{\varepsilon}\leq
\frac{\pi}3+5\sqrt{\varepsilon}.
$$
Since $\pi\le\alpha_1+2\alpha_3\le \alpha_1+2(\frac{\pi}{3}+5\sqrt{\varepsilon})$, we conclude that
\begin{equation}
\label{alhai}
\frac{\pi}3-10\sqrt{\varepsilon}\leq \alpha_1\leq \alpha_2\leq\alpha_3\leq \frac{\pi}3+5\sqrt{\varepsilon}.
\end{equation}
As a first estimate, we deduce $\tan\alpha_3\leq 2$, thus
$\tan'( s)=1+(\tan s)^2\leq 5$ if $s\in [\frac{\pi}3,\alpha_3]$. In particular,
(\ref{alhai}) yields
\begin{equation}
\label{tanalha}
\sqrt{3}-50\sqrt{\varepsilon}\leq \tan\alpha_1\le \tan\alpha_2\leq
\tan\alpha_3\leq \sqrt{3}+25\sqrt{\varepsilon}.
\end{equation}

Now let $T'_1$ be the regular triangle of edge length $2$ positioned in such a way that the side $a_1$ is common with $T$ and ${\rm int}\,T\cap{\rm int}\,T'_1\neq \emptyset$. Recalling that $v_2$ and $v_3$ are the endpoints of $a_1$,
we write $v'_1$ to denote the third vertex of $T'_1$, $z_1$ to denote the centroid of $T'_1$ and
$m=\frac12(v_2+v_3)$ to denote the midpoint of $a_1$.
As $a_3\geq a_2$ and $\frac{\pi}{2}>\alpha_3\geq \frac{\pi}3$, there exists a point $q$ such that $v'_1\in[m,q]$ and
 $v_1\in [v_3,q]$. In addition, we consider the intersection point $\{p\}=[q,m]\cap[v_1,v_2]\neq \emptyset$ (see Figure \ref{Fia2}).

\begin{center}
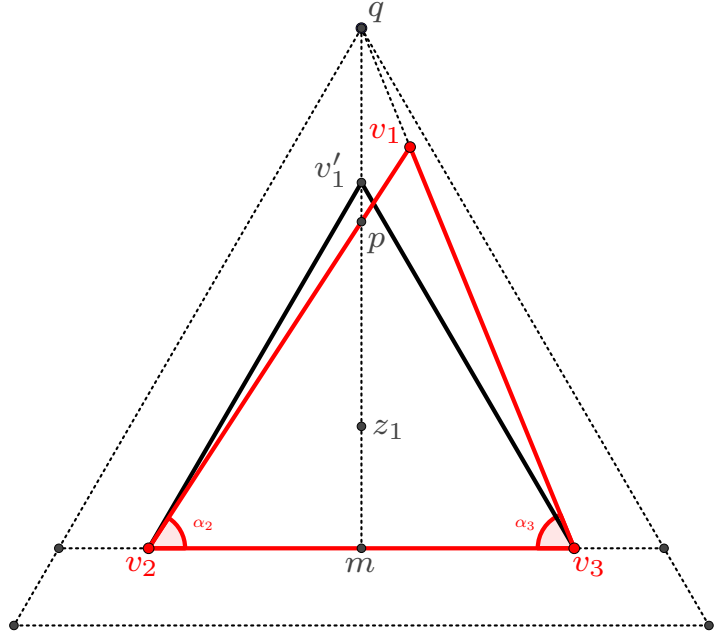
\begin{figure}

\centering
\scalebox{0.8}{

\definecolor{xdxdff}{rgb}{0.49019607843137253,0.49019607843137253,1.}
\definecolor{qqwuqq}{rgb}{1.,0.,0.}
\definecolor{uuuuuu}{rgb}{0.26666666666666666,0.26666666666666666,0.26666666666666666}
\definecolor{ffqqqq}{rgb}{1.,0.,0.}
\definecolor{ududff}{rgb}{0.30196078431372547,0.30196078431372547,1.}
\definecolor{cqcqcq}{rgb}{0.7529411764705882,0.7529411764705882,0.7529411764705882}
\begin{tikzpicture}[line cap=round,line join=round,>=triangle 45,x=1.0cm,y=1.0cm]
\draw [shift={(6.,2.)},line width=2.pt,color=qqwuqq,fill=qqwuqq,fill opacity=0.10000000149011612] (0,0) -- (0.:0.6) arc (0.:57.11262292598501:0.6) -- cycle;
\draw [shift={(13.,2.)},line width=2.pt,color=qqwuqq,fill=qqwuqq,fill opacity=0.10000000149011612] (0,0) -- (112.09789211568872:0.6) arc (112.09789211568872:180.:0.6) -- cycle;
\draw [line width=2.pt,color=ffqqqq] (6.,2.)-- (13.,2.);
\draw [line width=2.pt] (13.,2.)-- (9.5,8.062177826491071);
\draw [line width=2.pt] (9.5,8.062177826491071)-- (6.,2.);
\draw [line width=1.pt,dotted] (9.5,2.)-- (9.5,8.062177826491071);
\draw [line width=2.pt,color=ffqqqq] (6.,2.)-- (10.3,8.65);
\draw [line width=1.pt,dotted] (9.5,10.620370370370376)-- (14.47697315384761,2.);
\draw [line width=1.pt,dotted] (14.47697315384761,2.)-- (13.,2.);
\draw [line width=1.pt,dotted] (6.,2.)-- (4.523026846152386,2.);
\draw [line width=1.pt,dotted] (4.523026846152386,2.)-- (9.5,10.620370370370376);
\draw [line width=1.pt,dotted] (10.3,8.65)-- (9.5,10.620370370370376);
\draw [line width=2.pt,color=ffqqqq] (13.,2.)-- (10.3,8.65);
\draw [line width=1.pt,dotted] (9.5,8.062177826491071)-- (9.5,10.620370370370376);
\draw [line width=1.pt,dotted] (4.523026846152386,2.)-- (3.784540269228577,0.7209037280603428);
\draw [line width=1.pt,dotted] (3.784540269228577,0.7209037280603428)-- (15.215459730771412,0.7209037280603469);
\draw [line width=1.pt,dotted] (15.215459730771412,0.7209037280603469)-- (14.47697315384761,2.);
\begin{scriptsize}
\draw [fill=ffqqqq] (6.,2.) circle (2.5pt);
\draw[color=ffqqqq] (5.89,1.7) node {\scalebox{1.9}{$v_2$}};
\draw [fill=ffqqqq] (13.,2.) circle (2.5pt);
\draw[color=ffqqqq] (13.25,1.7) node {\scalebox{1.9}{$v_3$}};
\draw [fill=uuuuuu] (9.5,8.062177826491071) circle (2.0pt);
\draw[color=uuuuuu] (9.0,8.27) node {\scalebox{1.9}{$v_1'$}};
\draw [fill=uuuuuu] (9.5,2.) circle (2.0pt);
\draw[color=uuuuuu] (9.48,1.72) node {\scalebox{1.9}{$m$}};
\draw [fill=uuuuuu] (9.5,4.020725942163691) circle (2.0pt);
\draw[color=uuuuuu] (9.95,3.97) node {\scalebox{1.9}{$z_1$}};
\draw [fill=ffqqqq] (10.3,8.65) circle (2.5pt);
\draw[color=ffqqqq] (9.9,8.87) node {\scalebox{1.9}{$v_1$}};
\draw [fill=uuuuuu] (9.5,10.620370370370376) circle (2.0pt);
\draw[color=uuuuuu] (9.74,10.88) node {\scalebox{1.9}{$q$}};
\draw [fill=uuuuuu] (9.5,7.412790697674417) circle (2.0pt);
\draw[color=uuuuuu] (9.75,7.05) node {\scalebox{1.9}{$p$}};
\draw[color=qqwuqq] (6.9,2.39) node {$\alpha_2$};
\draw[color=qqwuqq] (12.2,2.39) node {$\alpha_3$};
\draw [fill=uuuuuu] (4.523026846152386,2.) circle (2.0pt);
\draw [fill=uuuuuu] (14.47697315384761,2.) circle (2.0pt);
\draw [fill=uuuuuu] (9.5,10.620370370370374) circle (2.0pt);
\draw [fill=xdxdff] (9.5,10.620370370370376) circle (2.5pt);
\draw [fill=uuuuuu] (9.5,10.620370370370374) circle (2.0pt);
\draw [fill=uuuuuu] (3.784540269228577,0.7209037280603428) circle (2.0pt);
\draw [fill=uuuuuu] (15.215459730771412,0.7209037280603469) circle (2.0pt);
\end{scriptsize}
\end{tikzpicture}}
\caption{Illustration of the triangles $T=[v_1,v_2,v_3]$, $T_1'=[v_1',v_2,v_3]$ together with the auxiliary points $m,p,q,z_1$ in the case where $p\in [v_1',m]$ . In the case   $p\notin [v_1',m]$ (not shown in the figure) we have  $T_1'\subset T$ and $p=q=v_1$.
}
\label{Fia2}
\end{figure}
\end{center}

 We deduce from (\ref{tanalha}) that
 $$\|q-v'_1\|=\tan(\alpha_3)-\sqrt{3}\leq  25\sqrt{\varepsilon}$$
 and
 $$
 \|p-m\|=\tan(\alpha_2)\ge \tan(\alpha_1)\ge \sqrt{3}-50\sqrt{\varepsilon},
 $$
 and if $p\in [v_1',m]$, then also
 $$
 \|p-v'_1\|=\|v_1'-m\|-\|p-m\|=\sqrt{3}-\tan(\alpha_2)\leq 50\sqrt{\varepsilon}.
 $$
 Therefore, $\|v'_1-z_1\|=\frac2{\sqrt{3}}$ implies that in any case
\begin{equation}\label{T1primeT}
\left(1-25\sqrt{3}\,\sqrt{\varepsilon}\right)(T'_1-z_1)\subset T-z_1\subset
\left(1+\frac{25\sqrt{3}}2\,\sqrt{\varepsilon}\right)(T'_1-z_1).
\end{equation}
For the regular triangle
$$
T_1=z_1+\left(1-25\sqrt{3}\,\sqrt{\varepsilon}\right)(T'_1-z_1),
$$
with centroid at $z_1$, we have
\begin{equation}
\label{T1T}
T_1-z_1\subset T-z_1\subset \left(1+70\sqrt{\varepsilon}\right)(T_1-z_1).
\end{equation}
In addition, let $z$ be the centroid of $T$. The argument above also shows that
$$
v_1\in v'_1+\frac{25\sqrt{3}}{2}\,\sqrt{\varepsilon}\,(T'_1-z_1).
$$
Since $z=\frac13(v_1+v_2+v_3)$ and $z_1=\frac13(v'_1+v_2+v_3)$, we  get
\begin{equation}
\label{T1z}
\begin{array}{rcl}
z-z_1&\in& \frac{25\sqrt{3}}{2\cdot 3}\,\sqrt{\varepsilon}\cdot (T'_1-z_1)\subset
10\sqrt{\varepsilon}\cdot (T_1-z_1),\\[1ex]
z_1-z&\in& 20\sqrt{\varepsilon}\cdot (T_1-z_1),
\end{array}
\end{equation}
where for the second containment we used that $-(T_1-z_1)\subset 2(T_1-z_1)$. In summary,
\eqref{T1T} and \eqref{T1z} show that the regular triangle $T_1$ is a very good approximation of $T$.

Note that $p_i,q_i$ lie on the same side of $H_2$ (above $a_i$) for $i=1,2,3$. This follows from
$\alpha_i\le \frac{\pi}{2}$,
$a_i\ge 2$, \eqref{airatio} and \eqref{titau}, which imply that $t_ia_j\le \frac{1}{12}(2+\frac{1}{6})<0.2<1\le \frac{a_k}{2}$ for $i,j,k\in \{1,2,3\}$. Moreover, for $i=1,2,3$ we have
\begin{equation}\label{bhnew}
1.77>(1+4 \sqrt{\varepsilon})(\sqrt{3}+25\sqrt{\varepsilon})\ge \frac{a_i}{2}\tan(\alpha_3)\ge h_i\ge \frac{a_i}{2}\tan(\alpha_1)\ge \sqrt{3}-50\sqrt{\varepsilon}>1.68,
\end{equation}
and hence in particular
\begin{equation}
\label{ATTprime1}
2>1.77>A(T)=h_1 >1.68>1.5.
\end{equation}

\bigskip

\noindent {\bf Step 2} {\it $A(K,-K)$ and $A(H_2,-H_2)$ are $\varepsilon$ close, and $L(H_0)$, $L(H_1)$, $L(K)$ are $\varepsilon$ close.}

\medskip

We deduce from (\ref{AK-K}) that $6\sqrt{3}\,A(K,-K)\le 26\sqrt{3}<50$. Therefore \eqref{locref1},
Lemma~\ref{Bekte-Weil-H0H2},
 $H_1\subset K\subset H_2$ and $L(H_0)\leq L(H_1)$ imply that
\begin{align}
\nonumber
50\varepsilon&\geq L(K)^2-6\sqrt{3}\,A(K,-K)
\geq L(K)^2-6\sqrt{3}\,A(H_2,-H_2)\\
\label{L2-A}
&\geq  L(H_1)^2-6\sqrt{3}\,A(H_2,-H_2)\geq L(H_0)^2-6\sqrt{3}\,A(H_2,-H_2)\geq 0.
\end{align}
In particular,
\begin{align*}
L(K)^2-6\sqrt{3}\,A(K,-K)&\le 50\varepsilon,\\
6\sqrt{3}\,A(H_2,-H_2)-L(K)^2&\le 0,
\end{align*}
which yields
\begin{equation}
\label{AKH2}
A(H_2,-H_2)\leq A(K,-K)+5\varepsilon.
\end{equation}
Using $L(K)\geq L(H_1)\geq L(H_0)\geq L(T)\geq 6$ and
\begin{align*}
L(K)^2-6\sqrt{3}\,A(H_2,-H_2)&\le 50\varepsilon,\\
6\sqrt{3}\,A(H_2,-H_2)-L(H_1)^2&\le 0,
\end{align*}
we get
$$
50\varepsilon\geq L(K)^2-L(H_1)^2=(L(K)-L(H_1))(L(K)+L(H_1))\geq 12(L(K)-L(H_1)).
$$
We deduce that
\begin{equation}
\label{LKH1}
L(K)\leq L(H_1)+5\varepsilon.
\end{equation}
A similar argument shows that
\begin{equation}
\label{LH1H0}
L(H_1)\leq L(H_0)+5\varepsilon.
\end{equation}

This completes Step 2.

\bigskip

For the remaining part of the proof, we set $\gamma=10^2$. In the following, we distinguish whether
\begin{equation}
\label{tismall}
\max\{t_1,t_2,t_3\}\leq \gamma\sqrt{\varepsilon}
\end{equation}
is satisfied or not. If \eqref{tismall} holds, then $H_2$ is $\sqrt{\varepsilon}$ close to $K$ (see the argument below).

\bigskip

\noindent {\bf Step 3} {\it If \eqref{tismall} holds, then $d_{\rm tr}(K)\le 4\gamma \sqrt{\varepsilon}$.}

\medskip

It follows from (\ref{T1T}) and $T\subset K$ that
\begin{equation}
\label{T1inK}
T_1-z_1\subset K-z_1.
\end{equation}
Using \eqref{tismall} and recalling that  $z$ is the centroid of $T$, we get
$$
K-z\subset H_2-z\subset \left(1+3 \max\{t_1,t_2,t_3\}\right)(T-z)\subset (1+3\gamma\sqrt{\varepsilon})(T-z).
$$
Therefore (\ref{T1T}) and (\ref{T1z}) imply that if (\ref{tismall}) holds, then
\begin{align}
\nonumber
K-z_1&= K-z+(z-z_1) \subset
(1+3\gamma\sqrt{\varepsilon})(T-z_1)+3\gamma\sqrt{\varepsilon}(z_1-z)\\
&\subset [1+70\sqrt{\varepsilon}+3\gamma \sqrt{\varepsilon}+210\gamma\varepsilon +60 \gamma\varepsilon]
(T_1-z_1)\nonumber \\
\label{tismallK}
&\subset \left(1+4\gamma\sqrt{\varepsilon}\right)(T_1-z_1).
\end{align}
In view of (\ref{T1inK}), we conclude Proposition~\ref{Betke-Weil-local-stab} if  (\ref{tismall}) holds.

\bigskip

It remains to consider the case where
\begin{equation}
\label{tibig0}
\max\{t_1,t_2,t_3\}> \gamma\sqrt{\varepsilon}.
\end{equation}
This case will finally lead to a contradiction.

It follows from (\ref{ticloset}) and (\ref{tibig0}) that
\begin{equation}
\label{tibig}
\min\{t_1,t_2,t_3\}\geq 86\sqrt{\varepsilon}.
\end{equation}

\noindent {\bf Step 4} {\it Assuming (\ref{tibig0}), $H_1$ is reasonably close to $H_0$.}

\medskip

More precisely, we claim that
\begin{equation}
\label{piqi}
\|p_i-q_i\|< 0.06<0.1\mbox{ \ \ for $i=1,2,3$},
\end{equation}
which is what we mean by saying that $H_1$ is ``reasonably close" to $H_0$.

Let $\{i,j,k\}=\{1,2,3\}$, and assume that $q_i\neq p_i$ (otherwise (\ref{piqi}) readily holds). For the line
 $\ell_i$ through $p_i$ and $q_i$ and parallel to $a_i$, let $\tilde{v}_j$ be the reflection of $ {v}_j$
through $\ell_i$, and hence $p_i$ is the midpoint of $[\tilde{v}_j,v_k]$. For the triangle
$\widetilde{T}_i=[q_i,\tilde{v}_j,v_k]$, we have
$$
A(\widetilde{T}_i)=2A([q_i,p_i,v_k])=\|p_i-q_i\|t_ih_i.
$$
Recall from \eqref{bhnew} that $1.68< h_i <1.77$. Using \eqref{aiclose2} and $t_i\leq\frac{1}{12}$ by  (\ref{titau}), we get
$$
\|\tilde{v}_j-v_k\|=2\|p_i-v_k\|\leq 2\left(\frac{a_i}2+\frac{1}{12} h_i\right)\leq a_i+\frac{1}{6} h_i
\le 2+\frac{7}{6\cdot 180}+\frac{1}{6}\cdot 1.77\le 2.31.
$$
Therefore
$$
A(\widetilde{T}_i)\geq \|p_i-q_i\|t_i\cdot 1.68\ge \frac{\|p_i-q_i\|t_i}{2.31}\cdot 1.68\cdot \|\tilde{v}_j-v_k\|.
$$
Since
$$
\frac{\|p_i-q_i\|t_i}{2.31}\cdot 1.68\le \frac{a_i}{12\cdot 2.31}\cdot 1.68 <a_i\le \|\tilde{v}_j-v_k\|,
$$
Claim~\ref{triangleineq} can be applied. In Combination with
$t_i\geq 86\sqrt{\varepsilon}$ (by (\ref{tibig})), this  leads to
\begin{align*}
\|q_i-v_j\| +\|q_i-v_k\|&=\|q_i-\tilde{v}_j\| +\|q_i-v_k\|\\
&\geq
\|\tilde{v}_j-v_k\|+\frac{1}{2.31}\left(\frac{\|p_i-q_i\|t_i\cdot 1.68}{2.31}\right)^2\\
&= \|p_i-v_j\| +\|p_i-v_k\| + \|p_i-q_i\|^2\,t_i^2\,\frac{1.68^2}{2.31^3}\\
&\geq \|p_i-v_j\| +\|p_i-v_k\| + \|p_i-q_i\|^2 \,\frac{86^2\cdot 1.68^2}{2.31^3}\, \varepsilon   .
\end{align*}
We deduce from (\ref{LH1H0}) that
$$5\,\varepsilon\geq \|p_i-q_i\|^2 \,\frac{86^2\cdot 1.68^2}{2.31^3}\, \varepsilon,$$
and hence (\ref{piqi}) follows.

\medskip

\noindent {\bf Step 5} {\it Assuming (\ref{tibig0}), we have $K\subset D:=\frac12\,H_1+\frac12\,H_2$.}

\medskip

The polygon $D\subset H_2$
has twelve sides (see Figure \ref{Fig3}).

\begin{center}
\begin{figure}

\centering
\scalebox{0.4}{
\definecolor{xdxdff}{rgb}{0.49019607843137253,0.49019607843137253,1.}
\definecolor{uuuuuu}{rgb}{0.26666666666666666,0.26666666666666666,0.26666666666666666}
\definecolor{zzttqq}{rgb}{0.6,0.2,0.}
\definecolor{ududff}{rgb}{0.30196078431372547,0.30196078431372547,1.}
\definecolor{qqwuqq}{rgb}{0.,0.39215686274509803,0.}
\begin{tikzpicture}[line cap=round,line join=round,>=triangle 45,x=1.0cm,y=1.0cm]
\fill[line width=2.pt,color=zzttqq,fill=zzttqq,fill opacity=0.10000000149011612] (4.6705026639999945,7.326140110000009) -- (17.804855917999998,-7.289238140000015) -- (0.30537635999999335,-7.6789815600000155) -- cycle;
\draw [line width=2.pt,color=zzttqq] (4.6705026639999945,7.326140110000009)-- (17.804855917999998,-7.289238140000015);
\draw [line width=2.pt,color=zzttqq] (17.804855917999998,-7.289238140000015)-- (0.30537635999999335,-7.6789815600000155);
\draw [line width=2.pt,color=zzttqq] (0.30537635999999335,-7.6789815600000155)-- (4.6705026639999945,7.326140110000009);
\draw [line width=2.pt] (6.479191485415802,7.3664227118132795)-- (18.256021058308615,-5.738357970189127);
\draw [line width=2.pt] (2.5264865420783305,7.278389194144292)-- (-1.3038295043376478,-5.888322215410651);
\draw [line width=2.pt] (3.179723019591773,-10.87743852542114)-- (16.849583437660886,-10.572987291165703);
\draw [line width=2.pt,dash pattern=on 3pt off 3pt,color=green] (4.6705026639999945,7.326140110000009)-- (12.97621239524618,0.13680003766967452);
\draw [line width=2.pt,dash pattern=on 3pt off 3pt,color=green] (12.97621239524618,0.13680003766967452)-- (17.804855917999998,-7.289238140000015);
\draw [line width=2.pt,dash pattern=on 3pt off 3pt,color=green] (17.804855917999998,-7.289238140000015)-- (10.067657827557744,-10.724032404976462);
\draw [line width=2.pt,dash pattern=on 3pt off 3pt,color=green] (10.067657827557744,-10.724032404976462)-- (0.30537635999999335,-7.6789815600000155);
\draw [line width=2.pt,dash pattern=on 3pt off 3pt,color=green] (0.30537635999999335,-7.6789815600000155)-- (0.6769335190250809,0.9205506773987393);
\draw [line width=2.pt,dash pattern=on 3pt off 3pt,color=green] (0.6769335190250809,0.9205506773987393)-- (4.6705026639999945,7.326140110000009);
\draw [line width=1.pt,dotted] (1.7753065570277258,-2.626096507717178)-- (17.804855917999998,-7.289238140000015);
\draw [line width=1.pt,dotted] (5.002362613469259,-7.5743716211699645)-- (4.6705026639999945,7.326140110000009);
\draw [line width=1.pt,dotted] (10.18018905245636,1.1951834462874285)-- (0.30537635999999335,-7.6789815600000155);

\draw [line width=2.pt,] (2.5264865420783305,7.278389194144292)-- (6.479191485415802,7.3664227118132795);
\draw [line width=2.pt] (18.256021058308615,-5.738357970189127)-- (16.849583437660886,-10.572987291165703);
\draw [line width=2.pt] (-1.3038295043376478,-5.888322215410651)-- (3.179723019591773,-10.87743852542114);

\draw (2.916657273999994,1.3630657839999971) node[anchor=north west] {\scalebox{1.9}{$a_1$}};
\draw (11.374089487999996,-1.0533434200000074) node[anchor=north west] {\scalebox{1.9}{$a_2$}};
\draw (9.601782915999995,-6.4) node[anchor=north west] {\scalebox{1.9}{$a_3$}};
\draw (-0.23,1.5) node[anchor=north west] {\scalebox{1.9}{\begin{color}{green}$q_1$\end{color}}};
\draw (-0.47,0.7) node[anchor=north west] {\scalebox{1.9}{\begin{color}{orange}$p_1$\end{color}}};

\draw (13.25,0.413) node[anchor=north west] {\scalebox{1.9}{\begin{color}{green}$q_2$\end{color}}};
\draw (12.4,1.5) node[anchor=north west] {\scalebox{1.9}{\begin{color}{orange}$p_2$\end{color}}};

\draw (9.659218439999995,-10.880005894000026) node[anchor=north west] {\scalebox{1.9}{\begin{color}{green}$q_3$\end{color}}};
\draw (8.559218439999995,-10.880005894000026) node[anchor=north west] {\scalebox{1.9}{\begin{color}{orange}$p_3$\end{color}}};

\draw (-0.4,-7.7) node[anchor=north west] {\scalebox{1.9}{\begin{color}{red}$v_2$\end{color}}};
\draw (-2.75,-5.4) node[anchor=north west] {\scalebox{1.9}{$w_{21}$}};
\draw (2.00,-11.0) node[anchor=north west] {\scalebox{1.9}{$w_{23}$}};

\draw (17.8,-7.4) node[anchor=north west] {\scalebox{1.9}{\begin{color}{red}$v_1$\end{color}}};
\draw (16.8,-10.6) node[anchor=north west] {\scalebox{1.9}{$w_{13}$}};
\draw (18.4,-5.4) node[anchor=north west] {\scalebox{1.9}{$w_{12}$}};

\draw (3.95,8.4) node[anchor=north west] {\scalebox{1.9}{\begin{color}{red}$v_3$\end{color}}};
\draw (6.7,7.7) node[anchor=north west] {\scalebox{1.9}{$w_{32}$}};

\draw (1.05,7.6) node[anchor=north west] {\scalebox{1.9}{$w_{31}$}};

\begin{scriptsize}
\draw [fill=uuuuuu] (6.479191485415802,7.3664227118132795) circle (2.0pt);
\draw [fill=uuuuuu] (2.5264865420783305,7.278389194144292) circle (2.0pt);
\draw [fill=uuuuuu] (-1.3038295043376478,-5.888322215410651) circle (2.0pt);
\draw [fill=uuuuuu] (3.179723019591773,-10.87743852542114) circle (2.0pt);
\draw [fill=uuuuuu] (16.849583437660886,-10.572987291165703) circle (2.0pt);
\draw [fill=uuuuuu] (18.256021058308615,-5.738357970189127) circle (2.0pt);

\draw [fill=uuuuuu] (1.7753065570277258,-2.626096507717178) circle (2.0pt);
\draw [fill=uuuuuu] (10.18018905245636,1.1951834462874285) circle (2.0pt);
\draw [fill=uuuuuu] (5.002362613469258,-7.574371621169964) circle (2.0pt);
\draw [fill=uuuuuu] (4.912481939192984,-3.538729346165252) circle (2.0pt);

\draw [fill=green] (0.6769335190250809,0.9205506773987393) circle (2.5pt);
\draw [fill=green] (12.97621239524618,0.13680003766967452) circle (2.5pt);
\draw [fill=green] (10.067657827557744,-10.724032404976462) circle (2.5pt);

\draw [fill=orange] (0.5240175483120926,0.3949020280728398) circle (2.5pt);
\draw [fill=orange] (2.487939511999994,-0.17642072500000347) circle (2.5pt);
\draw [fill=orange] (12.25830754861328,0.9356555791751305) circle (2.5pt);
\draw [fill=orange] (11.237679290999997,0.01845098499999805) circle (2.5pt);
\draw [fill=orange] (9.127740996788008,-10.744965964681803) circle (2.5pt);
\draw [fill=orange] (9.06,-7.48) circle (2.5pt);

\draw [line width=1.pt,dotted] (11.237679290999997,0.01845098499999805)
-- (12.25830754861328,0.9356555791751305);

\draw [line width=1.pt,dotted] (9.06,-7.48)
-- (9.127740996788008,-10.744965964681803);

\draw [line width=1.pt,dotted] (2.487939511999994,-0.17642072500000347)
-- (0.5240175483120926,0.3949020280728398);

\draw [line width=2.pt,dashed,color=orange] (17.804855917999998,-7.289238140000015)-- (12.25830754861328,0.9356555791751305);

\draw [line width=2.pt,dashed,color=orange] (12.25830754861328,0.9356555791751305)-- (4.6705026639999945,7.326140110000009);

\draw [line width=2.pt,dashed,color=orange] (4.6705026639999945,7.326140110000009)-- (0.5240175483120926,0.3949020280728398);

\draw [line width=2.pt,dashed,color=orange] (0.5240175483120926,0.3949020280728398)--
(0.31,-7.68);

\draw [line width=2.pt,dashed,color=orange] (0.31,-7.68)--
(9.127740996788008,-10.744965964681803);

\draw [line width=2.pt,dashed,color=orange] (9.127740996788008,-10.744965964681803)--
(17.8,-7.29);
\draw [fill=zzttqq] (4.6705026639999945,7.326140110000009) circle (3.5pt);
\draw [fill=blue] (5.574847074, 7.346281411) circle (3.0pt);%

\draw [fill=blue] (9.727701942, 3.751611375) circle (3.0pt);%

\draw [line width=2.pt,dotted,color=blue] (5.574847074, 7.346281411)-- (9.727701942, 3.751611375);

\draw [line width=2.pt,dotted,color=blue](9.727701942, 3.751611375)--(15.61611673, -2.800778966);

\draw [fill=blue] (15.61611673, -2.800778966) circle (3.0pt);%

\draw [fill=blue] (18.03043849,-6.513798055) circle (3.0pt);%
\draw [fill=zzttqq] (17.804855917999998,-7.289238140000015) circle (3.5pt);

\draw [line width=2.pt,dotted,color=blue] (15.61611673, -2.800778966)-- (18.03043849,-6.513798055);

\draw [line width=2.pt,dotted,color=blue] (18.03043849,-6.513798055) -- (17.32721968, -8.931112715) ;

\draw [fill=blue] (17.32721968, -8.931112715) circle (3.0pt);%

\draw [fill=blue] (13.45862064, -10.64850984) circle (3.0pt);%

\draw [line width=2.pt,dotted,color=blue] (17.32721968, -8.931112715) -- (13.45862064, -10.64850984) ;
\draw [line width=2.pt,dotted,color=blue] (6.623690425, -10.80073546) -- (13.45862064, -10.64850984) ;

\draw [fill=blue] (6.623690425, -10.80073546) circle (3.0pt);%

\draw [fill=blue] (1.742549690, -9.278210045) circle (3.0pt);%
\draw [fill=zzttqq] (0.30537635999999335,-7.6789815600000155) circle (3.5pt);

\draw [line width=2.pt,dotted,color=blue] (6.623690425, -10.80073546) -- (1.742549690, -9.278210045) ;
\draw [line width=2.pt,dotted,color=blue] (0.30537635999999335,-7.6789815600000155) -- (1.742549690, -9.278210045) ;

\draw [fill=blue] (-0.4992265720,-6.783651888 ) circle (3.0pt);%

\draw [fill=blue] (-0.3134479925, -2.483885769 ) circle (3.0pt);%

\draw [line width=2.pt,dotted,color=blue] (-0.3134479925, -2.483885769 ) -- (-0.4992265720,-6.783651888 );
\draw [line width=2.pt,dotted,color=blue] (-0.3134479925, -2.483885769 ) -- (1.601710030, 4.099469936 ) ;

\draw [fill=blue] (1.601710030, 4.099469936 ) circle (3.0pt);%

\draw [fill=blue] (3.598494603, 7.302264652) circle (3.0pt);%

\draw [line width=2.pt,dotted,color=blue]   (3.598494603, 7.302264652)-- (1.601710030, 4.099469936 ) ;

\draw [line width=2.pt,dotted,color=blue]   (3.598494603, 7.302264652)--  (5.574847074, 7.346281411) ;

\end{scriptsize}
\end{tikzpicture}}
\caption{Illustration
for  \begin{color}{blue} $D$ \end{color} $ =
\frac{1}{2}$\begin{color}{green}$H_1$ \end{color}$+\,\frac{1}{2}H_2$.
}
\label{Fig3}
\end{figure}

\end{center}

\bigskip

 Six of these sides are subsets of the six sides of $H_2$ as $h_{H_1}(u)=h_{H_2}(u)$
holds for any $u\in\cU(H_2)$, and the other six sides of $D$
are parallel to the sides of $H_1$. We prove
\begin{equation}
\label{KinD}
K\subset D
\end{equation}
indirectly, so we assume that there exists
$x\in  K\setminus D$, and seek a contradiction. Then there exists $u\in\cU(D)$ such that
$h_K(u)\ge \langle x,u\rangle >h_D(u)$. Since $x\in K\subset H_2$, $u$ is normal to a side of $D$ parallel to a side of $H_1$. Let $u$ be normal to $[v_i,q_j]$, $i\neq j$, so that $w_{ij}$ is the vertex of $H_2$ where $u$ is a normal to $H_2$.
For
$$
\Delta=h_{H_2}(u)-h_{H_1}(u)=\langle u,w_{ij}-q_j\rangle,
$$
we have
$$
\Delta\cdot\|q_j-v_i\|=2A([q_j,v_i,w_{ij}])=t_jh_j\|w_{ij}-q_j\|.
$$
We have $h_j\geq 1.68$ by \eqref{bhnew}, and since $t_i\leq   \frac1{12}$ by \eqref{titau}
and $h_i<2$,
we obtain using (\ref{piqi}) that
$$\|q_j-v_i\|\leq \|p_i-v_i\|+0.1\le \frac{a_i}{2}+\frac{1}{12}h_i+0.1\le \frac{3}{2}+\frac{1}{6}+0.1<  2,$$
 and
$$\|w_{ij}-q_j\|\geq \|w_{ij}-p_j\|-0.1\ge \frac{a_j}{2}-0.1\ge 1-0.1=0.9.$$
Thus we get
$$
\Delta\geq \frac{1}{2}\, {t_j}\cdot 1.68\cdot 0.9\geq 60\sqrt{\varepsilon} .
$$
We deduce that
$$
\langle u,x-q_j\rangle>h_{D}(u)-h_{H_1}(u)=\frac12(h_{H_2}(u)-h_{H_1}(u))=\frac{\Delta}2\geq
30 \sqrt{\varepsilon} ,
$$
therefore
$$
A([q_j,v_i,x])=\frac{\langle u,x-q_j\rangle\cdot \|q_j-v_i\|}2\geq 15\sqrt{\varepsilon}
\cdot \|q_j-v_i\|.
$$
Note that $15\sqrt{\varepsilon}  \le 0.02$ and $\|q_j-v_i\|\ge\|p_j-v_i\|-0.1\ge  0.9$.
It follows from Claim~\ref{triangleineq} and $\|q_j-v_i\|\leq 2$ that
$$
\|x-v_i\|+\|x-q_j\|\geq \|q_j-v_i\|+\frac1{\|q_j-v_i\|}\cdot\left(15\sqrt{\varepsilon}\right)^2
\geq \|q_j-v_i\|+15\varepsilon.
$$
Writing $\widetilde{H}$ to denote the polygon that is the convex hull of $x$ and $H_1$, we have
that $[v_i,x]$ and $[x,q_j]$ are sides of $\widetilde{H}$, and hence
$$
L(K)\geq L(\widetilde{H})\geq L(H_1)+15\varepsilon,
$$
contradicting (\ref{LKH1}). In turn, we conclude (\ref{KinD}).

\medskip

\noindent {\bf Step 6} {\it $A(H_2,-H_2)-A(K,-K)$ is too large, assuming (\ref{tibig0}).}

\medskip

In order to calculate $A(D,-D)$ for
$D=\frac12\,H_1+\frac12\, H_2$,
  we claim that $D_0=\frac12\,T+\frac12\, H_2$ satisfies
\begin{equation}
\label{D0D}
A(D,-D)=A(D_0,-D_0)+A(T)(t_1t_2+t_2t_3+t_1t_3).
\end{equation}

\begin{center}
\begin{figure}

\centering
\scalebox{0.4}{
\definecolor{xdxdff}{rgb}{0.49019607843137253,0.49019607843137253,1.}
\definecolor{uuuuuu}{rgb}{0.26666666666666666,0.26666666666666666,0.26666666666666666}
\definecolor{zzttqq}{rgb}{0.6,0.2,0.}
\definecolor{ududff}{rgb}{0.30196078431372547,0.30196078431372547,1.}
\definecolor{qqwuqq}{rgb}{0.,0.39215686274509803,0.}
\begin{tikzpicture}[line cap=round,line join=round,>=triangle 45,x=1.0cm,y=1.0cm]
\fill[line width=2.pt,color=zzttqq,fill=zzttqq,fill opacity=0.10000000149011612] (4.6705026639999945,7.326140110000009) -- (17.804855917999998,-7.289238140000015) -- (0.30537635999999335,-7.6789815600000155) -- cycle;
\draw [line width=2.pt,color=zzttqq] (4.6705026639999945,7.326140110000009)-- (17.804855917999998,-7.289238140000015);
\draw [line width=2.pt,color=zzttqq] (17.804855917999998,-7.289238140000015)-- (0.30537635999999335,-7.6789815600000155);
\draw [line width=2.pt,color=zzttqq] (0.30537635999999335,-7.6789815600000155)-- (4.6705026639999945,7.326140110000009);
\draw [line width=2.pt] (6.479191485415802,7.3664227118132795)-- (18.256021058308615,-5.738357970189127);
\draw [line width=2.pt] (2.5264865420783305,7.278389194144292)-- (-1.3038295043376478,-5.888322215410651);
\draw [line width=2.pt] (3.179723019591773,-10.87743852542114)-- (16.849583437660886,-10.572987291165703);
\draw [line width=2.pt,dash pattern=on 3pt off 3pt,color=green] (4.6705026639999945,7.326140110000009)-- (12.97621239524618,0.13680003766967452);
\draw [line width=2.pt,dash pattern=on 3pt off 3pt,color=green] (12.97621239524618,0.13680003766967452)-- (17.804855917999998,-7.289238140000015);
\draw [line width=2.pt,dash pattern=on 3pt off 3pt,color=green] (17.804855917999998,-7.289238140000015)-- (10.067657827557744,-10.724032404976462);
\draw [line width=2.pt,dash pattern=on 3pt off 3pt,color=green] (10.067657827557744,-10.724032404976462)-- (0.30537635999999335,-7.6789815600000155);
\draw [line width=2.pt,dash pattern=on 3pt off 3pt,color=green] (0.30537635999999335,-7.6789815600000155)-- (0.6769335190250809,0.9205506773987393);
\draw [line width=2.pt,dash pattern=on 3pt off 3pt,color=green] (0.6769335190250809,0.9205506773987393)-- (4.6705026639999945,7.326140110000009);
\draw [line width=1.pt,dotted] (1.7753065570277258,-2.626096507717178)-- (17.804855917999998,-7.289238140000015);
\draw [line width=1.pt,dotted] (5.002362613469259,-7.5743716211699645)-- (4.6705026639999945,7.326140110000009);
\draw [line width=1.pt,dotted] (10.18018905245636,1.1951834462874285)-- (0.30537635999999335,-7.6789815600000155);

\draw [line width=2.pt,] (2.5264865420783305,7.278389194144292)-- (6.479191485415802,7.3664227118132795);
\draw [line width=2.pt] (18.256021058308615,-5.738357970189127)-- (16.849583437660886,-10.572987291165703);
\draw [line width=2.pt] (-1.3038295043376478,-5.888322215410651)-- (3.179723019591773,-10.87743852542114);

\draw (2.916657273999994,1.3630657839999971) node[anchor=north west] {\scalebox{1.9}{$a_1$}};
\draw (11.374089487999996,-1.0533434200000074) node[anchor=north west] {\scalebox{1.9}{$a_2$}};
\draw (9.601782915999995,-6.4) node[anchor=north west] {\scalebox{1.9}{$a_3$}};
\draw (-0.23,1.5) node[anchor=north west] {\scalebox{1.9}{\begin{color}{green}$q_1$\end{color}}};

\draw (13.25,0.413) node[anchor=north west] {\scalebox{1.9}{\begin{color}{green}$q_2$\end{color}}};
\draw (9.659218439999995,-10.880005894000026) node[anchor=north west] {\scalebox{1.9}{\begin{color}{green}$q_3$\end{color}}};

\draw (-0.4,-7.7) node[anchor=north west] {\scalebox{1.9}{\begin{color}{red}$v_2$\end{color}}};
\draw (-2.75,-5.4) node[anchor=north west] {\scalebox{1.9}{$w_{21}$}};
\draw (2.00,-11.0) node[anchor=north west] {\scalebox{1.9}{$w_{23}$}};

\draw (17.8,-7.4) node[anchor=north west] {\scalebox{1.9}{\begin{color}{red}$v_1$\end{color}}};
\draw (16.8,-10.6) node[anchor=north west] {\scalebox{1.9}{$w_{13}$}};
\draw (18.4,-5.4) node[anchor=north west] {\scalebox{1.9}{$w_{12}$}};

\draw (3.95,8.4) node[anchor=north west] {\scalebox{1.9}{\begin{color}{red}$v_3$\end{color}}};
\draw (6.7,7.7) node[anchor=north west] {\scalebox{1.9}{$w_{32}$}};
\draw (1.05,7.6) node[anchor=north west] {\scalebox{1.9}{$w_{31}$}};
\begin{scriptsize}
\draw [fill=uuuuuu] (6.479191485415802,7.3664227118132795) circle (2.0pt);
\draw [fill=uuuuuu] (2.5264865420783305,7.278389194144292) circle (2.0pt);
\draw [fill=uuuuuu] (-1.3038295043376478,-5.888322215410651) circle (2.0pt);
\draw [fill=uuuuuu] (3.179723019591773,-10.87743852542114) circle (2.0pt);
\draw [fill=uuuuuu] (16.849583437660886,-10.572987291165703) circle (2.0pt);
\draw [fill=uuuuuu] (18.256021058308615,-5.738357970189127) circle (2.0pt);


\draw [fill=uuuuuu] (1.7753065570277258,-2.626096507717178) circle (2.0pt);
\draw [fill=uuuuuu] (10.18018905245636,1.1951834462874285) circle (2.0pt);
\draw [fill=uuuuuu] (5.002362613469258,-7.574371621169964) circle (2.0pt);
\draw [fill=uuuuuu] (4.912481939192984,-3.538729346165252) circle (2.0pt);

\draw [fill=green] (0.6769335190250809,0.9205506773987393) circle (2.5pt);
\draw [fill=green] (12.97621239524618,0.13680003766967452) circle (2.5pt);
\draw [fill=green] (10.067657827557744,-10.724032404976462) circle (2.5pt);

\draw [line width=1.pt,dotted] (11.237679290999997,0.01845098499999805)
-- (12.25830754861328,0.9356555791751305);

\draw [line width=1.pt,dotted] (9.06,-7.48)
-- (9.127740996788008,-10.744965964681803);

\draw [line width=1.pt,dotted] (2.487939511999994,-0.17642072500000347)
-- (0.5240175483120926,0.3949020280728398);

\draw [fill=zzttqq] (4.6705026639999945,7.326140110000009) circle (3.5pt);

\draw [fill=blue] (9.727701942, 3.751611375) circle (3.0pt);%

\draw [line width=2.pt,dotted,color=blue] (5.574847074, 7.346281411)-- (9.727701942, 3.751611375);

\draw [line width=2.pt,dotted,color=blue](9.727701942, 3.751611375)--(15.61611673, -2.800778966);

\draw [fill=blue] (15.61611673, -2.800778966) circle (3.0pt);%

\draw [fill=zzttqq] (17.804855917999998,-7.289238140000015) circle (3.5pt);

\draw [line width=2.pt,dotted,color=blue] (15.61611673, -2.800778966)-- (18.03043849,-6.513798055);

\draw [line width=2.pt,dotted,color=blue] (18.03043849,-6.513798055) -- (17.32721968, -8.931112715) ;

\draw [fill=blue] (13.45862064, -10.64850984) circle (3.0pt);%

\draw [line width=2.pt,dotted,color=blue] (17.32721968, -8.931112715) -- (13.45862064, -10.64850984) ;
\draw [line width=2.pt,dotted,color=blue] (6.623690425, -10.80073546) -- (13.45862064, -10.64850984) ;

\draw [fill=blue] (6.623690425, -10.80073546) circle (3.0pt);%


\draw [fill=blue] (18.03043849,-6.513798055) circle (3.0pt);
\draw [fill=blue] (5.574847074, 7.346281411) circle (3.0pt);
\draw [line width=2.pt,dotted,color=magenta] (18.03043849,-6.513798055) --
(5.574847074, 7.346281411);

\draw [fill=blue] (17.32721968, -8.931112715) circle (3.0pt);
\draw [fill=blue] (1.742549690, -9.278210045) circle (3.0pt);
\draw [line width=2.pt,dotted,color=magenta] (17.32721968, -8.931112715) -- (1.742549690, -9.278210045) ;

\draw [fill=blue] (-0.4992265720,-6.783651888 ) circle (3.0pt);
\draw [fill=blue] (3.598494603, 7.302264652) circle (3.0pt);
\draw [line width=2.pt,dotted,color=magenta] (-0.4992265720,-6.783651888 ) -- (3.598494603, 7.302264652) ;

\draw [fill=zzttqq] (0.30537635999999335,-7.6789815600000155) circle (3.5pt);

\draw [line width=2.pt,dotted,color=blue] (6.623690425, -10.80073546) -- (1.742549690, -9.278210045) ;
\draw [line width=2.pt,dotted,color=blue] (0.30537635999999335,-7.6789815600000155) -- (1.742549690, -9.278210045) ;

\draw [fill=blue] (-0.3134479925, -2.483885769 ) circle (3.0pt);%

\draw [line width=2.pt,dotted,color=blue] (-0.3134479925, -2.483885769 ) -- (-0.4992265720,-6.783651888 );
\draw [line width=2.pt,dotted,color=blue] (-0.3134479925, -2.483885769 ) -- (1.601710030, 4.099469936 ) ;

\draw [fill=blue] (1.601710030, 4.099469936 ) circle (3.0pt);%

\draw [line width=2.pt,dotted,color=blue]   (3.598494603, 7.302264652)-- (1.601710030, 4.099469936 ) ;

\draw [line width=2.pt,dotted,color=blue]   (3.598494603, 7.302264652)--  (5.574847074, 7.346281411) ;

\draw [line width=2.pt,dotted,color=magenta] (3.598494603, 7.302264652) --
(5.574847074, 7.346281411);

\draw [line width=2.pt,dotted,color=magenta] (17.32721968, -8.931112715) --
(18.03043849,-6.513798055);

\draw [line width=2.pt,dotted,color=magenta] (1.742549690, -9.278210045) --
 (-0.4992265720,-6.783651888 );

\end{scriptsize}
\end{tikzpicture}}
\caption{Illustration
for  \begin{color}{blue} $D$ \end{color} $ =
 \frac{1}{2}$\begin{color}{green}$H_1$ \end{color}$+\,\frac{1}{2}H_2$ and
 \begin{color}{magenta}$D_0$\end{color}$=\frac{1}{2}$\begin{color}{brown}$T$\end{color}$+\frac{1}{2} H_2$.
}
\label{Fig3new}
\end{figure}
\end{center}

We prove  (\ref{D0D}) by applying Betke's formula (\ref{BetkeP-P}) three times.
Let $b_i$ be the side of $D$ containing $q_i$ (and hence $b_i$ is contained in the ``long" side of $H_2$ parallel to
$a_i$), and let $\tilde{u}_i\in\cU(D)$ be the normal to $D$ at the vertex $v_i$ of $T$, $i=1,2,3$,
and hence $-\tilde{u}_i$ is the exterior unit normal to $b_i$.  In addition,
let $d_i$ be the diagonal of $D$ that cuts off $b_i$ and the two sides neighboring $b_i$ from $D$,
and hence $d_i$ is parallel to $b_i$, $i=1,2,3$. We write $\nu_{ij}$ to denote the  exterior unit normal to the side of $D$ cut off by and parallel to $[q_i,v_j]$, $i\neq j$, and hence $\nu_{ij}$ and $\nu_{ik}$ are the normals to the two sides neighboring $b_i$,
$\{i,j,k\}=\{1,2,3\}$.

Now $d_1$ dissects $D$ into a trapezoid and a polygon $D_1$ with $10$ sides, and on the way to verify (\ref{D0D}), we first claim that
\begin{equation}
\label{D1D}
A(D,-D)=A(D_1,-D_1)+S_D(\tilde{u}_1)S_D(\nu_{1,2})|\det(\tilde{u}_1,\nu_{1,2})|.
\end{equation}
To prove \eqref{D1D}, we choose a unit vector $u_1\neq \tilde{u}_1$ very close to $\tilde{u}_1$ and such that
$\langle u_1, v_2-v_3\rangle>0$.
When we apply Betke's formula (\ref{BetkeP-P}) to calculate the difference $A(D,-D)-A(D_1,-D_1)$ using $u_1$
as the reference vector, we deduce after cancellation of summands common to $A(D,-D)$ and $A(D_1,-D_1)$,
 that the exterior unit normal $-\tilde{u}_1$ to the sides $b_1$ of $D$ and $d_1$ of $D_1$ does not occur in either term and precisely one of the two exterior unit normals of the two sides of $D$ neighboring $b_1$, in this case $\nu_{12}$,  occurs.
To see this, we observe that  if $\{i,j,k\}=\{1,2,3\}$, then
\begin{equation}
\label{angle-ui}
\angle(-\tilde{u}_i,\nu_{ij})<0.16
\end{equation}
because $\angle(-\tilde{u}_i,\nu_{ij})=\angle(q_i-v_j,v_k-v_j)$ satisfies
$\tan\angle(-\tilde{u}_i,\nu_{ij})\le (t_ih_i)/(1-0.06)<0.16$
using the estimates \eqref{piqi}, $h_i<1.77$ and $t_i\leq \frac1{12}$ (cf.~\eqref{titau}).
On the one hand, if $\{i,j\}=\{2,3\}$, then \eqref{alhai}, \eqref{angle-ui}
and  $\sqrt{\varepsilon}<(6\cdot 180)^{-1}$ yield
$$
\angle(\tilde{u}_1,\nu_{i1})=\angle(\tilde{u}_1,-\tilde{u}_i)-\angle(-\tilde{u}_i,\nu_{i1})=
\alpha_j-\angle(-\tilde{u}_i,\nu_{i1})\geq
\frac{\pi}3-10\sqrt{\varepsilon}-0.16>0.87;
$$
 therefore, the angle of $\tilde{u}_1$ with any other exterior unit normal to $D$ or $D_1$ is at least $0.87$.
On the other hand,  $\angle(-\tilde{u}_1,\nu_{1j})<0.16$ for $j=2,3$ by \eqref{angle-ui}, concluding the proof
of \eqref{D1D}.

Since
$$
S_D(\nu_{1,2})=\frac{1}{2}\|q_1-v_{2}\|,\quad S_D(\tilde{u}_1)=\frac{1}{2}(t_2+t_3)a_1,\quad
|\det(\tilde{u}_1,\nu_{12})|=\frac{t_1h_1}{\|q_1-v_{2}\|},
$$
we deduce from \eqref{D1D} that
\begin{equation}
\label{D1D-2}
A(D,-D)=A(D_1,-D_1)+\frac12\,t_1(t_2+t_3)\,A(T).
\end{equation}

Next we observe that $d_2$ dissects $D_1$ into a trapezoid and a polygon $D_2$ with $8$ sides.
Choosing a unit vector $u_2\neq \tilde{u}_2$ very close to $\tilde{u}_2$,
and applying Betke's formula (\ref{BetkeP-P}) to calculate $A(D_1,-D_1)-A(D_2,-D_2)$ with $u_2$ as reference vector,
we conclude as before that
$$
A(D_1,-D_1)=A(D_2,-D_2) +\frac{1}{2}t_2(t_1+t_3)A(T).
$$
Finally, the analogous argument for a
 a unit vector $u_3\neq \tilde{u}_3$ very close to $\tilde{u}_3$ implies that
$$
A(D_2,-D_2)=A(D_0,-D_0)+ \frac{1}{2}t_3(t_1+t_2)A(T).
$$
Thus we arrive at
$$
A(D,-D)=A(D_0,-D_0)+ \frac{1}{2}[t_1(t_2+t_3)+t_2(t_1+t_3)+   t_3(t_1+t_2)]A(T),
$$
which completes the proof of (\ref{D0D}).

\medskip

We recall that $A(T,-T)=2A(T)$, $A(H_2,-H_2)=2A(T)(1+t_1t_2+t_2t_3+t_3t_1)$, and
 observe that $A(H_2,-T)=A(T,-H_2)=A(T,-T)$ by the symmetry and rigid motion invariance of the mixed area  and Minkowski's formula (\ref{MinkowskiPQ}).
Thus (\ref{D0D}) and the linearity of the mixed area imply
\begin{align*}
A(D,-D)&= A(D_0,-D_0)+A(T)(t_1t_2+t_2t_3+t_1t_3)\\
&=A\left(\frac12\,T+\frac12\, H_2,-\frac12\,T-\frac12\, H_2\right)+A(T)(t_1t_2+t_2t_3+t_1t_3)\\
&=
\frac14\cdot 2A(T)+\frac24\cdot 2A(T)+\frac14\cdot 2A(T)(1+t_1t_2+t_2t_3+t_3t_1)\\
&\qquad\qquad +A(T)(t_1t_2+t_2t_3+t_1t_3)\\
&= 2A(T)+\frac{3}{2}A(T)(t_1t_2+t_2t_3+t_3t_1).
\end{align*}

We deduce from (\ref{AKH2}), $K\subset D$ ({\it cf.} (\ref{KinD})), (\ref{tibig}) and $A(T)\ge 3/2$ that
\begin{align*}
5\varepsilon&\geq A(H_2,-H_2)- A(K,-K)\geq  A(H_2,-H_2)- A(D,-D)\\
&=2A(T)(1+t_1t_2+t_2t_3+t_3t_1)-2A(T)-\frac{3}{2}A(T)(t_1t_2+t_2t_3+t_3t_1)\\
&=  \frac{1}{2}A(T)(t_1t_2+t_2t_3+t_3t_1)\geq \frac{1}{2} \cdot
\frac{3}{2}\cdot 2\cdot (86\cdot \gamma)\sqrt{\varepsilon}^{\,2}\ge \gamma^2\varepsilon,
\end{align*}
which is
a contradiction, proving that \eqref{tibig0} does not hold.
Therefore the argument in Step~3 proves Proposition~\ref{Betke-Weil-local-stab}.
\end{proof}

\section{Proof of Theorem~\ref{Betke-Weil-stab}}
\label{secglobal}

 Before starting the actual proof of Theorem~\ref{Betke-Weil-stab}, we recall Proposition~\ref{Betke-Weil-deform} and Lemma~\ref{Betke-Weil-regular}
proved in essence by Betke and Weil \cite{BeW91} as Lemma~1 and Lemma~2 in \cite{BeW91}.
We slightly modified the argument from of \eqref{BetkeP-P} and added an observation which turns out
to be useful.

\begin{prop}[Betke, Weil (1991)]
\label{Betke-Weil-deform}
If $P$ is a polygon with $k\ge 3$ sides, and $P$ is not a regular polygon with an odd number of sides, then
there exists  a polygon $P'$ with $k$ sides and arbitrarily close to $P$ such that
$$
\frac{L(P')^2}{A(P',-P')}<\frac{L(P)^2}{A(P,-P)}.
$$
\end{prop}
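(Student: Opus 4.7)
My approach is variational: I aim to identify the critical points of the functional $F(P):=L(P)^2/A(P,-P)$ on the space of convex $k$-gons and to show that a local minimum must be a regular $k$-gon with $k$ odd; the proposition then follows by constructing a decreasing deformation whenever $P$ is not of this form. The analysis splits into two stages: first, variations of the edge lengths with outer normals fixed, and then variations of the normals themselves.

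For the first stage, I would parametrize polygons $P$ with prescribed outer unit normals $u_1,\ldots,u_k\in\cU(P)$ (in counterclockwise order) by the positive edge lengths $s=(s_1,\ldots,s_k)$, subject to the closure condition $\sum_i s_iu_i=0$. By the bilinearity of the mixed area together with \eqref{MinkowskiPQ}, any variation $\delta s$ with $\sum_i\delta_iu_i=0$ gives
\[
\delta A(P,-P)=\sum_i h_P(-u_i)\delta_i,\qquad\delta L(P)=\sum_i\delta_i.
\]
Imposing $\delta F(P)=0$ for all admissible $\delta s$ and eliminating the two Lagrange multipliers coming from the closure equations shows that $2A(P,-P)-L(P)\cdot h_P(-u_i)$ is a linear functional of $u_i$. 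After translating $P$ appropriately, this reduces to the necessary condition
\[
h_P(-u_i)=\frac{2A(P,-P)}{L(P)}=:c\qquad\text{for all }i=1,\ldots,k.
\]

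For the second stage, I would consider variations of the normals. Rotating $u_j$ by an infinitesimal angle forces compensating adjustments of the two adjacent edge lengths $s_{j\pm 1}$ to maintain closure. Expanding $F$ to first order using Betke's formula \eqref{BetkeP-P} (which presents $A(P,-P)$ as a quadratic form in $s$ with coefficients depending only on the $u_i$) and substituting the identity $h_P(-u_i)=c$ from the first stage, one obtains a system of relations among the angular gaps and the edge lengths. Solving this system forces periodicity of period $1$ when $k$ is odd, so that $P$ is a regular $k$-gon. For even $k$ the system only forces period $2$, and an additional argument is required to exclude regular $k$-gons from being minimizers: since such a polygon is centrally symmetric and hence satisfies $A(P,-P)=A(P)$, an explicit symmetry-breaking perturbation that alternately elongates and shortens consecutive edges (with small normal tilts chosen to close the polygon) leaves $L(P)$ stationary to second order while strictly increasing $A(P,-P)-A(P)$ at second order, thereby decreasing $F$.

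The main obstacle I anticipate is the bookkeeping in the normal-variation step: one must track how the vertex of $P$ realizing $h_P(-u_i)$ shifts under a tilt of $u_j$, or whether a whole edge realizes it, as happens precisely when $-u_i\in\cU(P)$ --- the combinatorial fingerprint that distinguishes even from odd $k$. The nonlinear coupling between edge-length and angular adjustments imposed by the closure condition must be handled carefully. The second-order computation excluding regular $k$-gons with even $k$ is finite-dimensional and amenable to direct calculation once the correct symmetry-breaking perturbation has been identified.
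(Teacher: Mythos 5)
Your Stage 1 is sound: for fixed outer normals the mixed area $A(P,-P)$ is a genuine quadratic form in the edge lengths (the vertex realizing each $h_P(-u_i)$ is determined by the normals alone, so no combinatorial jumps occur), the first variation is $\sum_i h_P(-u_i)\delta_i$, and absorbing the two multipliers by a translation does yield $h_P(-u_i)=2A(P,-P)/L(P)$ at a critical point. This overlaps with the paper's Case 3, where the same fixed-normal edge translations, evaluated via Betke's formula, dispose of every equiangular odd $k$-gon whose quantities $e_{i-1}+e_i$ are not all equal. But the rest of the program has genuine gaps. Stage 2 is only a promise: the assertion that the first-order conditions under normal variations ``force periodicity of period $1$ when $k$ is odd'' is precisely the hard combinatorial content of the proposition, and it is entangled with the fact that $A(P,-P)$ fails to be differentiable in the normal directions exactly when some $-u_i$ hits the boundary of a normal cone. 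The paper does not classify critical points there at all; instead, for every polygon violating one of two combinatorial conditions (some $-N_P(v_1)\subset N_P(v_2)$, or some $-u_0\in N_P(v)$ not bisecting $N_P(v)$), it exhibits an explicit vertex move that keeps $A(P,-P)$ \emph{exactly} constant --- by choosing the reference vector $w$ in \eqref{BetkeP-P} so that the moved vertex never contributes --- while strictly decreasing $L$. That mechanism is not gradient descent and your framework does not produce it.

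Second, and concretely fatal as stated: your symmetry-breaking perturbation for regular even-gons fails for the square. Alternately elongating and shortening consecutive edges with the normals $(\pm 1,0),(0,\pm 1)$ fixed keeps you inside the family of rectangles (closure forces opposite sides equal), which are centrally symmetric, so $A(P,-P)=A(P)=ab$ while $L=2(a+b)$; the ratio $4(a+b)^2/(ab)$ has a strict local \emph{minimum} at $a=b$. So the square survives your test, and the ``small normal tilts chosen to close the polygon'' must carry the entire burden; you have not identified them, and the move that actually works (slide one vertex of the square along an incident edge --- a short computation with \eqref{MinkowskiP-P} shows $A(P',-P')$ is unchanged while $L(P')$ strictly drops) is not a perturbative correction of the alternating deformation but the paper's Case 1 in disguise. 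Until Stage 2 is carried out and the even case is repaired along these lines, the proposal is an outline rather than a proof.
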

\begin{proof}
For a vertex $v$ of the polygon $P$, we write $N_P(v)$ for the normal cone of $P$ at $v$; namely,
if $u_1,u_2\in \cU(P)$ are the exterior unit normals of the two sides meeting at $v$, then
$N_P(v)={\rm pos}\{u_1,u_2\}$.

\medskip

\noindent{\bf Case 1} {\it There exist vertices $v_1$ and $v_2$ of $P$
such that $-N_P(v_1)\subset N_P(v_2)$.}

In this case, $P$ is not a triangle. Let $v_3,v_4$ be the neighbors of $v_1$. For a
$v'_1\in[v_1,v_4]\backslash\{v_1,v_4\}$, let $P'$ be obtained from $P$ by replacing the vertex $v_1$ by $v'_1$.
In particular, there exists a unit vector $w\in -{\rm int}\,N_P(v_1)$ orthogonal to $[v'_1,v_3]$.
 Using this vector
$w$ in \eqref{BetkeP-P} and the property $-N_P(v_1)\subset N_P(v_2)$, we get $A(P',-P')=A(P,-P)$, while obviously
$L(P')<L(P)$ by strict containment.

\medskip

\noindent{\bf Case 2} {\it There exist no vertices $v_1$ and $v_2$ of $P$
such that $-N_P(v_1)\subset N_P(v_2)$ as in Case~1, but there exist a vertex $v$ and a side $e$
with exterior normal $u_0\in \cU(P)$
such that $-u_0\in N_P(v)$ and $-u_0$ does not halve the angle of $N_P(v)$. }

In this case, $P$ does not have any parallel sides, since we are not in Case~1.
 The line $\ell$ through $v$ parallel to $e$ is a support line of $P$.
 Let $v_4$ denote the point preceding $v$ and $v_3$ the point following $v$ on $\partial P$ in the clockwise order. Let $\nu_0$ denote the unit vector orthogonal to $u_0$ such that $(v_4-v)/\|v_4-v\|,(v_3-v)/\|v_3-v\|,\nu_0$ are in counter-clockwise order on the unit circle. Let $\alpha_4$ denote the angle enclosed by $v_4-v$ and $-\nu_0$ and $\alpha_3$ denote the angle enclosed by $v_3-v$ and $\nu_0$. Let $u_i$ denote the exterior unit normal of $[v_i,v]$ for $i=3,4$. Since
 $-u_0$ does not halve the angle enclosed by $u_3,u_4$, we have $\alpha_3\neq\alpha_4$. We may assume that
 $\alpha_4>\alpha_3$. By Fermat's principle, moving $v$ along $\ell$ an arbitrarily small amount in the direction of $\nu_0$ to $v'$ and denoting by $P'$ the polygon obtained from $P$ by replacing $v$ by $v'$, we get $L(P')<L(P)$. Clearly, we thus still get a convex $k$-gon if $v'$ is sufficiently close to $v$.

To prove $A(P,-P)=A(P',-P')$, we denote by $Q$ the (nonempty) convex hull of the (common) vertices
of $P$ and $P'$ (thus $v,v'$ are removed). Note that in the case where $P$ (and hence also $P'$) is a triangle, $Q$ is a segment. We consider the triangles  $\triangle =[v_3,v_4,v]$ and $\triangle' =[v_3,v_4,v']$, hence we have $P=Q\cup \triangle$ and $P'=Q\cup \triangle'$. For the segment $I=[v_3,v_4]$ we have
$
A(\triangle,-I)=A(\triangle , I)=A(\triangle)$.
Using this and the additivity of the mixed area in both arguments, we obtain
$$
A(P,-P)=A(Q,-Q)+2A(Q,-\triangle)-2A(Q,-I).
$$
A similar expression is obtained for $P'$ with $\triangle$ replaced by $\triangle'$.

We choose $v_5,v_6$ such that $e=[v_5,v_6]$ and $v_6-v_5$ is a positive multiple of $v'-v$.
If $v'$ is sufficiently close to $v$, then the first assumption in Claim 2 ensures that for the
exterior unit normals $u_j$ of the sides of $Q$ between $v_4$ and $v_5$ (in counter-clockwise order),
we have
$$h_\triangle (u_j)=\langle v_5,u_j\rangle=h_{\triangle'}(u_j)$$
and
that for the
exterior unit normals $u_j$ of the sides of $Q$ between $v_6$ and $v_3$ (in counter-clockwise order),
we have
$$h_\triangle (u_j)=\langle v_6,u_j\rangle=h_{\triangle'}(u_j).$$
But then (writing $e$ also for the length of the edge $e$)
$$
A(Q,-\triangle)=A(Q,-\triangle')=e\langle u_0,-v\rangle-e\langle u_0,-v'\rangle=e\langle u_0,v'-v\rangle=0,
$$
which proves the assertion.

\medskip

\noindent{\bf Case 3} {\it There exist no vertices $v_1$ and $v_2$ of $P$
such that $-N_P(v_1)\subset N_P(v_2)$, and for any vertex $v$ and side $e$
with exterior normal $u_0\in \cU(P)$
such that $-u_0\in N_P(v)$ the vector $u_0$ halves the angle of $N_P(v)$,
but $P$ is not a regular polygon with an odd number of sides.}

Let $u_1,\ldots,u_k$ be the exterior unit normals to the sides of $P$ in clockwise order. We observe that no closed half plane having the origin on its boundary contains $u_1,\ldots,u_k$.
We set $u_{i+pk}=u_i$ for any $i=1,\ldots,k$ and $p\in\Z$.

Since there exist no vertices $v_1$ and $v_2$ of $P$ such that $-N_P(v_1)\subset N_P(v_2)$, we have
$u_i\neq -u_j$ for any $i,j$, and there exists an $m\in\{2,\ldots,k-1\}$ such that
$-u_i\in{\rm int}\,{\rm pos}\{u_{i+m-1},u_{i+m}\}$ for any $i$. The conditions in Case~3 imply that
$-u_i$ actually halves the angle $\angle(u_{i+m-1},u_{i+m})$ between $u_{i+m-1}$ and $u_{i+m}$.

Let us assume that $\angle(u_{m},u_{m+1})$ is minimal among the angles of the from $\angle(u_j,u_{j+1})$.
We deduce that no $u_j$ lies in ${\rm int}\,{\rm pos}\{u_1,-u_{m}\}\subset {\rm pos}\{-u_m,-u_{m+1}\}$;
therefore, $u_1=u_{2m}$. We conclude that $k=2m-1$, and hence $k$ is odd.

As $-u_i$ halves $\angle(u_{i+m-1},u_{i+m})$ for any $i$, and
$-u_{i+m-1}$ halves $\angle(u_{i-1},u_{i})$, we deduce that
$\angle(u_{i+m-1},u_{i+m})=\angle(u_{i-1},u_{i})$ for any $i$. In turn, we conclude that
all exterior angles of $P$ are $2\pi/k$.

For any fixed $i\in\Z$, we write $e_i$ to denote both the side corresponding to $u_i$ of $P$ and its length, and
 $\ell_i$ to denote the line containing $e_i$. Since $P$ is not regular, the side lengths of $P$ are not the same;
therefore, there exists $1\leq p<q\leq k$ such that
\begin{equation}
\label{epqdiff}
e_{p-1}+e_p\neq e_{q-1}+e_q.
\end{equation}

Fixing $i\in\{1,\ldots,k\}$, if $|t|$ is small, then let $P_{i,t}$ be the $k$-gon
bounded by the lines $\ell_j$, $j\in\{1,\ldots,k\}\backslash\{i\}$ and $\ell_i+tu_i$.
It follows  that
\begin{align*}
L(P_{i,t})&=L(P)+2t\cdot\left(\frac1{\sin\frac{2\pi}k}-\frac1{\tan\frac{2\pi}k}\right)=L(P)+\kappa\cdot t\\
\intertext{and, choosing $w=-u_i$ in \eqref{BetkeP-P}, we see that}
A(P_{i,t},-P_{i,t})&=
A(P,-P)+(e_{i+m}+e_{i+m-1})\cdot\frac{t}{\sin\frac{2\pi}k}\cdot \sin\frac{\pi}k\\
&=
A(P,-P)+(e_{i+m}+e_{i+m-1})\cdot \varrho\cdot t.
\end{align*}
where $\kappa=2\frac{1-\cos\frac{2\pi}k}{\sin\frac{2\pi}k}$ and
$\varrho=\frac{\sin\frac{\pi}k}{\sin\frac{2\pi}k}$.
We deduce that
\begin{equation}
\label{ALder}
\left.\frac{d}{dt}\frac{A(P_{i,t},-P_{i,t})}{L(P_{i,t})^2}\right|_{t=0}
=\frac{(e_{i+m}+e_{i+m-1})\cdot \varrho \cdot L(P)-A(P,-P)\cdot 2\gamma}{L(P)^3}.
\end{equation}

We conclude from \eqref{epqdiff} and \eqref{ALder} the existence of some $i\in\{1,\ldots,k\}$ such that
$$
\left.\frac{d}{dt}\frac{A(P_{i,t},-P_{i,t})}{L(P_{i,t})^2}\right|_{t=0}
\neq 0.
$$
In particular, we can choose a $t\neq 0$ with arbitrarily small  absolute value such that
$$
\frac{A(P_{i,t},-P_{i,t})}{L(P_{i,t})^2}>\frac{A(P,-P)}{L(P)^2},
$$
and hence we can choose $P'=P_{i,t}$, completing the proof of Proposition~\ref{Betke-Weil-deform}.
\end{proof}

\bigskip

For regular polygons with an odd number of sides, we have the following estimates.

\begin{lemma}
\label{Betke-Weil-regular}
If $P$ is a regular polygon
 with an odd number $k\ge 5$ of sides, then
\begin{align}
\label{regpol-tr}
d_{\rm tr}(P)&>0.25\\[1ex]
\label{regpol-LA}
\frac{L(P)^2}{A(P,-P)}&\geq 20\sin\frac{\pi}{5}>1.1\cdot 6\sqrt{3}.
\end{align}
\end{lemma}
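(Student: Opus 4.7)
The approach is to compute both functionals explicitly for a regular odd-gon and then handle the two inequalities separately.

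For~\eqref{regpol-LA}, I would let $R$ denote the circumradius of $P$, so that $L(P)=2kR\sin(\pi/k)$. To evaluate $A(P,-P)$ via Minkowski's formula~\eqref{MinkowskiP-P}, the key observation is that, since $k$ is odd, for each side of $P$ with outer normal $u$ the antipode $-u$ points exactly at a vertex of $P$, so $h_P(-u)=R$; summing gives $A(P,-P)=\tfrac12 R\cdot L(P)=kR^2\sin(\pi/k)$ and hence
$$
\frac{L(P)^2}{A(P,-P)}=4k\sin(\pi/k).
$$
A short calculus check (the derivative has the sign of $\tan(\pi/k)-\pi/k>0$) shows that $k\mapsto 4k\sin(\pi/k)$ is increasing for $k\geq 3$, so for odd $k\geq 5$ the minimum is attained at $k=5$, giving $20\sin(\pi/5)$. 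The remaining inequality $20\sin(\pi/5)>1.1\cdot 6\sqrt{3}$ reduces, after squaring, to $250-50\sqrt{5}>130.68=(1.1\cdot 6\sqrt{3})^2$, which holds because $\sqrt{5}<2.239$.

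For~\eqref{regpol-tr}, the plan is to prove the stronger bound $d_{\rm tr}(P)\geq 2\cos(\pi/k)-1$ by comparing extremal radii. Let $T$ be any regular triangle with centroid $z$ satisfying $T-z\subset P-z\subset(1+\rho)(T-z)$, write $c$ for the centre of $P$, and let $R_T$ and $r_T=R_T/2$ denote the circum- and inradius of $T$. The inscribed and circumscribed discs of $P$ (both centred at $c$) have radii $r_P=R\cos(\pi/k)$ and $R$. Since $T\subset P\subset\overline{B}(c,R)$ and the smallest enclosing disc of the equilateral triangle $T$ is its circumscribed disc of radius $R_T$, we get $R_T\leq R$. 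Conversely, the inscribed disc of $P$ lies inside $P\subset z+(1+\rho)(T-z)$, a scaled equilateral triangle whose largest inscribed disc has radius $(1+\rho)r_T$; hence $r_P\leq(1+\rho)r_T$. Combining, $1+\rho\geq r_P/r_T\geq 2r_P/R=2\cos(\pi/k)\geq 2\cos(\pi/5)=(1+\sqrt{5})/2>1.25$, so $\rho>0.25$.

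The only mild obstacle is that the centroid $z$ of $T$ need not coincide with the centre $c$ of $P$; this is bypassed cleanly because the two bounds $R_T\leq R$ and $(1+\rho)r_T\geq r_P$ only use the existence of a disc enclosing $T$ (namely $\overline{B}(c,R)$) and of a disc contained in the scaled triangle (namely the inscribed disc of $P$), with no need to symmetrise over the three outward normals of $T$.
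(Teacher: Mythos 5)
Your proposal is correct. For \eqref{regpol-LA} you follow essentially the same route as the paper: Minkowski's formula \eqref{MinkowskiP-P} plus the observation that for odd $k$ the antipodes of the side normals point at vertices gives $L(P)^2/A(P,-P)=4k\sin(\pi/k)$, and the minimum over odd $k\ge 5$ is attained at $k=5$ (you make the monotonicity of $k\mapsto 4k\sin(\pi/k)$ explicit, which the paper leaves implicit). For \eqref{regpol-tr} your argument is genuinely different. The paper compares mixed areas: from $-P\subset(\cos\frac{\pi}{k})^{-1}P$ (centroid at the origin) it gets $A(P,-P)\le(\cos\frac{\pi}{k})^{-1}A(P)$, while $A(P,-P)\ge A(T_0,-T_0)=2A(T_0)\ge 2(1+d)^{-2}A(P)$, yielding $1+d\ge\sqrt{2\cos\frac{\pi}{5}}\approx 1.272$. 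You instead compare extremal radii: $R_T\le R$ because the circumdisc is the smallest enclosing disc of an equilateral triangle, and $(1+\rho)r_T\ge r_P$ because the indisc is the largest disc in a triangle, so $1+\rho\ge 2r_P/R_T\ge 2\cos\frac{\pi}{k}\ge 2\cos\frac{\pi}{5}\approx 1.618$. Your handling of the possible mismatch between the centroid of $T$ and the centre of $P$ is sound, since both radius bounds are translation-insensitive. Your approach is more elementary (no mixed areas needed) and delivers the strictly stronger bound $d_{\rm tr}(P)\ge 2\cos\frac{\pi}{k}-1\approx 0.618$ at $k=5$, versus the paper's $\approx 0.272$; the paper's mixed-area route has the mild advantage of reusing machinery already central to the rest of the argument. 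Either bound amply suffices for the threshold $0.25$ used in the proof of Theorem~\ref{Betke-Weil-stab}.
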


\begin{proof} We may assume that $L(P)=1$, and hence $A(P,-P)=(4k\sin\frac{\pi}{k})^{-1}$ according to (\ref{MinkowskiP-P}),
proving (\ref{regpol-LA}). For (\ref{regpol-tr}),  assuming that the origin is the centroid of $P$, we have
$-P\subset \left(\cos\frac{\pi}{k}\right)^{-1}P$, and hence
$A(P,-P)\leq \left(\cos\frac{\pi}{k}\right)^{-1}A(P)$. On the other hand,
if  $T_0\subset P$ is a regular triangle
with centroid $z_0$ and $P-z_0\subset (1+d)(T_0-z_0)$,
then
$$
A(P,-P)\geq A(T_0,-T_0)=2A(T_0)\geq 2(1+d)^{-2}A(P),
$$
and hence
$d_{\rm tr}(P)\geq \sqrt{2\cos\frac{\pi}{5}} -1>0.25$.
\end{proof}

\bigskip

\noindent {\it Proof of Theorem~\ref{Betke-Weil-stab}.}
Let $\varepsilon\in [0,2^{-28}]$ and let $K\subset\R^2$ be a convex domain with $L(K)^2\le \left(1+\varepsilon\right)6\sqrt{3}\,A(K,-K)$. Then $0\le \varepsilon <
(6\cdot 2400)^{-2}<(6\cdot 180)^{-2}$. We distinguish two cases.

Case 1: $d_{\rm tr}(K)\le 6^{-2}$. Then Proposition~\ref{Betke-Weil-local-stab} implies that $d_{\rm tr}(K)
\le 400\sqrt{\varepsilon}$, and the proof is finished.

Case 2: $d_{\rm tr}(K)> 6^{-2}$. We will show that in fact this case does not occur. We fix a number $\varepsilon'$ with  $0\le \varepsilon<\varepsilon'<
(6\cdot 2400)^{-2}$. Since $d_{\rm tr}(\cdot),A(\cdot,\cdot), L(\cdot)$ are continuous, there is a polygon $P\subset\R^2$ with $d_{\rm tr}(P)> 6^{-2}$ and $L(P)^2\le \left(1+\varepsilon'\right)6\sqrt{3}\,A(P,-P)$. Since
$d_{\rm tr}(\cdot),A(\cdot,\cdot), L(\cdot)$ are translation invariant, $d_{\rm tr}(\cdot)$ is scaling invariant and
$K\mapsto L^2(K)/A(K,-K)$ (for convex domains $K\subset\R^2$) is also scaling invariant, we can assume that $o\in P$ and $A(P,-P)=1$. Let $k\ge 3$ be the number of vertices of $P$. We write $\mathcal{P}_k$ for the set of all polygons $G\subset \R^2$ with at most $k$ vertices, $d_{\rm tr}(G)\ge 6^{-2}$, $o\in G$, $A(G,-G)=1$ and
 $L(G)^2\le \left(1+\varepsilon'\right)6\sqrt{3}$. Then in particular we have $P\in \mathcal{P}_k\neq\emptyset$.

 We claim that there is some $P_0\in \mathcal{P}_k$ such that $L(P_0)=\inf\{L(G):G\in\mathcal{P}_k\}$.
 For the proof, it is sufficient to consider a minimizing sequence $P_i\in\mathcal{P}_k$ with $L(P_i)\le L(P)$ for $i\in\N$. Let $B^2$ denote the unit disc with center at the origin. Then $P_i\subset L(P)B^2$ for $i\in\N$.
 An application of Blaschke's selection theorem shows that the sequence $P_i$, $i\in\N$, has a convergent
 subsequence with limit $P_0\subset\R^2$. Since all conditions involved in the definition of $\mathcal{P}_k$
 are preserved under limits and $L(\cdot)$ is continuous, we conclude that $P_0\in\mathcal{P}_k$ realizes the infimum.

 Since $d_{\rm tr}(P_0)\ge  6^{-2}>0$, $P_0$ is not a regular triangle. Assuming (for the moment) that
 $P_0$ is a regular $r$-gon with an odd number $r$,  we have  $k\ge r\ge 5$. Then Lemma~\ref{Betke-Weil-regular} shows that $L(P_0)^2\ge 1.1\cdot 6\sqrt{3}$. Since also $L(P_0)^2\le (1+\varepsilon')6\sqrt{3}$, we get $\varepsilon'\ge 0.1$, a contradiction.

 Hence $P_0\in\mathcal{P}_k$ is a $k$-gon, but not a regular polygon with an odd number of edges. Assume (for the moment) that $d_{\rm tr}(P_0)>  6^{-2}$. Proposition~\ref{Betke-Weil-deform} then shows
 that there is a $k$-gon $P_1$ such that $d_{\rm tr}(P_1)>  6^{-2}$ and
 $$
 \frac{L(P_1)^2}{A(P_1,-P_1)}<\frac{L(P_0)^2}{A(P_0,-P_0)}=L(P_0)^2.
 $$
 Again by scaling and translation invariance, we obtain a $k$-gon $P_2$ for which  $d_{\rm tr}(P_2)>  6^{-2}$,
 $A(P_2,-P_2)=1$, $o\in P_2$ and $L(P_2)^2<L(P_0)^2\le \left(1+\varepsilon'\right)6\sqrt{3}$, that is, $P_2\in\mathcal{P}_k$. But this contradicts the minimality of $L(P_0)$. Therefore we conclude that
 $d_{\rm tr}(P_0)=  6^{-2}$. Since $L(P_0)^2\le \left(1+\varepsilon'\right)6\sqrt{3}\,A(P_0,-P_0)$, it follows from
 another application of Proposition~\ref{Betke-Weil-local-stab}  that
 $$
 6^{-2}=d_{\rm tr}(P_0)\le 400\sqrt{\varepsilon'}<400\cdot(6\cdot 2400)^{-1}=6^{-2},
 $$
 a contradiction. This finally shows that the present case does not occur, which completes the argument.
\hfill $\Box$

\bigskip

\textbf{Acknowledgement.} The authors are grateful for helpful discussions with Martin Henk on the subject of the paper.

F.~Bartha was supported by the grants NKFIH KKP 129877, 2020-2.1.1-ED-2020-00003, TUDFO/47138-1/2019-ITM, EFOP-3.6.2-16-2017-0015, by the J\'anos Bolyai Research Scholarship of MTA, and by UNKP-20-5 - New national excellence program of ITM and NKFIH.

F.~Bencs is supported by the NKFIH (National Research, Development and Innovation Office, Hungary) grant KKP-133921.

K. B\"or\"oczky was supported by research grant NKFIH K 132002.

Daniel Hug was supported by research grant HU 1874/5-1 (DFG).

\end{document}